\newcommand{\excise}[1]{}
\newtheorem{thm}{Theorem}[section]
\newtheorem{lemma}[thm]{Lemma}
\newtheorem{cor}[thm]{Corollary}
\newtheorem{prop}[thm]{Proposition}
\theoremstyle{definition}
\newtheorem{alg}[thm]{Algorithm}
\newtheorem{example}[thm]{Example}
\newtheorem{remark}[thm]{Remark}
\newtheorem{defn}[thm]{Definition}
\newtheorem{prob}{Problem}
\newtheorem{conj}[prob]{Conjecture}
\numberwithin{equation}{section}
\newcommand{\ring}[1]{\ensuremath{\mathbb{#1}}}
\renewcommand\>{\rangle}
\newcommand\<{\langle}
\newcommand\NN{\ring{N}}
\newcommand\QQ{\ring{Q}}
\newcommand\ZZ{\ring{Z}}
\DeclareMathOperator\bul{bul} 
\begin{document}

\mbox{}
\title{How Do You Measure Primality?}
\author{Christopher O'Neill}
\address{Mathematics Department\\Duke University\\Durham, NC 27708}
\email{musicman@math.duke.edu}
\author{Roberto Pelayo}
\address{Mathematics Department\\University of Hawai`i at Hilo\\Hilo, HI 96720}
\email{robertop@hawaii.edu}

\date{\today}

\begin{abstract}
\hspace{-2.05032pt}
In commutative monoids, the $\omega$-value measures how far an element is from being prime.  This invariant, which is important in understanding the factorization theory of monoids, has been the focus of much recent study.  This paper provides detailed examples and an overview of known results on $\omega$-primality, including several recent and surprising contributions in the setting of numerical monoids.  As many questions related to $\omega$-primality remain, we provide a list of open problems accessible to advanced undergraduate students and beginning graduate students.

\end{abstract}

\maketitle

\section{Introduction}\label{s:intro}

The factorization theory of monoids has its genesis in understanding the decomposition of the positive natural numbers $\NN^*$ into prime numbers.  In this multiplicative monoid, the Fundamental Theorem of Arithmetic guarantees that any natural number greater than 1 can be written uniquely (up to re-ordering) as a product of prime numbers, and much of the multiplicative structure of $\NN^*$ is determined by this decomposition into primes.  

Many of the nice factorization properties enjoyed by $\NN^*$ very quickly break down when considering other monoids, as they often contain too few primes to decompose every element.  The study of factorizations in general monoids thus focuses on decomposing elements into \emph{irreducibles}, which satisfy a weaker condition than primes, and these decompositions are often non-unique.  The concept of a prime element still remains important in understanding the structure of these monoids, since prime elements are irreducibles that appear in every factorization of the elements they divide.  

In~\cite{origin}, Geroldinger develops the notion of $\omega$-primality (Definition~\ref{d:omega}), which measures how far a non-unit element is from being prime.  This $\omega$-function assigns to every non-unit element $n$ in a monoid,  a value $\omega(n) \in \NN^* \cup \{\infty\}$ with the property that $\omega(n) = 1$ if and only if $n$ is prime in the monoid.  Thus, understanding the $\omega$-function gives tremendous insight into not only which elements are prime, but also how ``far from prime'' an element is.  

In any cancellative, commutative monoid, every prime element is irreducible, but not every irreducible element is necessarily prime.  In fact, the existence of non-prime irreducible elements coincides with the existence of non-unique factorizations.  As such, much of the initial work in computing $\omega$-primality focused on the irreducible elements~\cite{origin, tame}.  More recently, though, computing $\omega$-values for arbitrary elements has produced interesting algorithms, closed formulae, and asymptotic results~\cite{andalg, quasi, compasymp}.

In certain settings, the definition of the $\omega$-function becomes rather tractable, and its study can be facilitated using available algorithms in established computer packages.  This makes many research questions on $\omega$-primality accessible to both strong undergraduate students and professional researchers alike.  

This paper focuses on providing a review of known results on $\omega$-primality, numerous examples in specific monoids, and a range of open problems.  As these problems are introduced, every effort is made to include relevant background, and all are accessible to advanced undergraduates, thereby serving as potential research problems for REU students or graduate students.  

We begin in Section~\ref{s:general} by providing two equivalent definitions of the $\omega$-function (Definition~\ref{d:omega} and Proposition~\ref{p:bullet}), along with some of its basic properties (Proposition~\ref{p:basicprops}).  Several computational examples of $\omega$-values are also worked out in detail for different monoids, and existing results for block monoids (and more generally for Krull monoids) are also discussed.  We turn our attention in Section~\ref{s:numerical} to a particular class of monoids, known as \emph{numerical monoids} (Definition~\ref{d:numerical_monoid}).  We review several results on the $\omega$-function for numerical monoids, including an algorithm suitable for computation (Algorithm~\ref{a:numerical}), a closed form for the $\omega$-function in certain special cases (Theorem~\ref{t:2genomega}), and a recent periodicity result (Theorem~\ref{t:quasi}) that gives an asymptotic description of the $\omega$-function for any numerical monoid (Corollary~\ref{c:asymptotics}).  Once again, we work out several examples in detail.  

Even with the numerous results in Sections~\ref{s:general} and~\ref{s:numerical}, many open questions remain.  We conclude this paper with several open problems related to $\omega$-primality in Section~\ref{s:open}.  This includes questions related to various aspects of numerical monoids (Sections~\ref{s:newnumerical}-\ref{s:newalgs}), as well as several other classes of monoids that are known to have interesting factorization structures, but for which little is known about $\omega$-primality (Section~\ref{s:newmonoids}).

\section{$\omega$-primality and its properties}\label{s:general}

In this section, we introduce the $\omega$-primality function and some of its basic properties, focusing on several computational examples in specific monoids.  Unless otherwise stated, all monoids are assumed to be commutative, cancellative, and atomic (i.e., non-unit elements can be written as a product of finitely many irreducibles).  In what follows, let $\NN$ denote the set of non-negative integers, and $\NN^*$ the set of positive integers.  

\subsection{The $\omega$-primality function}

Recall that a non-unit element in a monoid is \emph{irreducible} if it cannot be written as a product of non-units.  The set $\NN^*$ under multiplication forms a monoid in which every decomposition of $n \in \NN^*$ into irreducible elements is unique up to re-ordering.  Example~\ref{e:hilbert} provides an example of a submonoid of $\NN^*$ that fails to inherit this property.
  
\begin{example}\label{e:hilbert}
Let $H = \{x \in \NN^* : x \equiv 1 \bmod 4\}$ denote the set of natural numbers congruent to 1 mod 4.  Since $1^2 \equiv 1 \bmod 4$, the monoid $H$ forms a multiplicative submonoid of the positive integers called the \emph{Hilbert Monoid}, whose algebraic structure has been studied extensively~\cite{delta_acm, elas_local, acm_arithmetic, acm_elas}.  

The factorization structure of $H$ is heavily determined by prime factorization in $\NN$, since multiplication in $H$ 
coincides with multiplication in $\NN^*$.  Any positive prime $p \in \ZZ$ with $p \equiv 1 \bmod 4$ is irreducible in $H$, and therefore
cannot be written as a product of two non-unit elements of $H$.  The converse of this, however, does not always hold.  For instance, $9 \in H$ can be decomposed uniquely in $\NN^*$ as $9 = 3 \cdot 3$; since $3 \not \in H$, there is no factorization for $9$ in $H$, and it is therefore irreducible.  In fact, an element $x \in H$ 
is irreducible if and only if either $x$ is prime in $\NN^*$ or $x = p_1p_2$ for prime integers $p_1, p_2 \equiv 3 \bmod 4$ in $\NN^*$.  

Elements in $H$ can always be written as a product of irreducible elements, but these factorizations need not be unique.  
For example, consider $441 = 3^27^2 \in H$.  Since $3, 7 \equiv 3 \bmod 4$, neither lie in $H$, but $3^2$, $7^2$ and $3 \cdot 7$ all lie in $H$ 
and are irreducible by the argument above.  Thus, we can write $441 = (3^2)(7^2) = (3 \cdot 7)(3 \cdot 7)$ as 
two distinct factorizations into irreducible elements.  
\end{example}

Monoids, as algebraic objects, have a single binary operation.  Although the set of natural numbers $\NN$ has two binary operations, 
the monoid in Example~\ref{e:hilbert} focuses only on the multiplicative operation.  It is natural to consider irreducibles and 
factorizations in additive submonoids of $\NN$ as well, motivating the following example.

\begin{example}\label{e:mcnugget}
Let $M = \{6a + 9b + 20c : a, b, c \in \NN\} \subset \NN$ denote the additive submonoid of $\NN$ 
generated by 6, 9 and 20.  This monoid is referred to as the \emph{McNugget Monoid} since McDonald's 
at one time sold Chicken McNuggets in packs of 6, 9 and 20.  In particular, an integer $n$ lies in $M$ 
if it is possible to purchase $n$ Chicken McNuggets using only these pack sizes.  

Since $M$ is an additive monoid, divisibility in $M$ does not coincide with the standard notion of 
divisibility in $\NN$.  In particular, 6 divides 15 in $M$ since $6 + 9 = 15$ and $9$ lies in $M$.  
The elements $6$, $9$ and $20$ are irreducible in $M$, since each cannot be written as a linear combination 
of the other two, and every other non-zero element in $M$ is reducible by definition.  Different factorizations of 
an element $n \in M$ can be viewed as the different ways of purchasing $n$ Chicken McNuggets using packs of 
6, 9 and 20.  For instance, 18 can be factored as $9 + 9$ and $6 + 6 + 6$ in $M$, and one can purchase 18 
Chicken McNuggets by purchasing either 2 packs of 9 McNuggets or 3 packs of 6.  
\end{example}

Nonunique factorizations in monoids are often studied using several invariants, including elasticity, delta sets, and catenary degree, which measure how far an element is from having a unique factorization (see~\cite{nonuniq} for details).  This paper focuses on the $\omega$-primality invariant, which has received much attention in the recent literature.

\begin{defn}\label{d:omega}
Fix an element $x$ in a commutative, cancellative, atomic monoid $M$.  
The \emph{$\omega$-primality function} assigns to each element $x \in M$ the value $\omega_M(x) = m$ if $m$ 
is the smallest positive integer with the property that whenever $x \mid \prod_{i=1}^r q_i$ for $r > m$, 
there exists a subset $T \subset \{1, \ldots, r\}$ with $|T| \le m$ such that $x \mid \prod_{i \in T} q_i$.  
If no such $m$ exists, define $\omega_M(x) = \infty$.  When $M$ is clear from context, we simply write $\omega(x)$.  
\end{defn}

\begin{remark}\label{r:farfromprime}
Recall that an element $x$ in a monoid $M$ is \emph{prime} if it is primitive with respect to divisibility; that is, $x$ is prime if  
$x \mid q_1 \cdots q_r$ implies $x \mid q_i$ for some $i$.  Clearly, Definition~\ref{d:omega} was constructed so that  $\omega(x) = 1$ if and only if $x$ is prime.  Moreover, the larger the value of $\omega(x)$, the further $x$ is from being prime.  For instance, suppose that $\omega(x) = 2$ and 
$x \mid q_1 \cdots q_r$.  While we cannot guarantee that $x$ divides some $q_i$, we can guarantee it divides some 
product $q_iq_j$.  
\end{remark}

\begin{example}\label{e:natural}
In the multiplicative monoid $\NN^*$, every positive integer can be written uniquely as a product of prime integers in $\NN^*$.  
If a prime $p \in \NN^*$ divides a product $a_1 \cdots a_r$ of positive integers, then $p$ occurs in the prime 
factorization of some $a_i$, so $p \mid a_i$.  This means $\omega(p) = 1$; see Remark~\ref{r:farfromprime}.  
More generally, fix a positive integer $n = p_1 \cdots p_\ell \in \NN^*$ for primes $p_1, \ldots, p_\ell \in \NN^*$, 
and suppose $n$ divides a product $a_1 \cdots a_r$ of positive integers.  This means the product $p_1 \cdots p_\ell$ 
occurs within the expression $a_1 \cdots a_r$ upon replacing each $a_i$ with its prime factorization.  At worst, 
each prime factor of $n$ occurs in the prime factorization of a different $a_j$, so this bounds $\omega(n) \le \ell$.  
Since $n \mid p_1 \cdots p_\ell$, we have $\omega(n) = \ell$.  In particular, the $\omega$-value of any positive integer 
$n \in \NN^*$ equals the length of the prime factorization of $n$.  
\end{example}

Example~\ref{e:natural} demonstrates that in the monoid $\NN^*$, $\omega$-primality measures the length of an element's unique factorization.   In general, every irreducible element  in a unique factorization domain is prime, and an argument similar to the above shows that the $\omega$-value of a non-unit element in these monoids is precisely its factorization length.

\begin{example}\label{e:hilbertomega}
Let $H$ denote the Hilbert monoid defined in Example~\ref{e:hilbert}.  Any irreducible $u \in M$ not prime in $\NN$ is a product 
of two primes in $\NN$, both of which are congruent to $3 \bmod 4$.  Let $x = 1225 = 5^2 7^2$.  We claim $\omega_H(x) = 4$.  Suppose 
$x \mid u_1 \cdots u_r$ for irreducible elements $u_1, \ldots, u_r \in H$.  Both 5 and 7 must occur twice in the prime factorization of $u_1 \cdots u_r$ 
over $\NN$ since $\NN$ has unique factorizations.  This means some subset $T \subset \{1, \ldots, r\}$ with $|T| \le 4$ yields 
$x \mid \prod_{i \in T} u_i$, so $\omega_H(x) \le 4$.  Moreover,  $x \mid 5 \cdot 5 \cdot (7 \cdot 11) \cdot (7 \cdot 11)$, 
where the elements $5$ and $7 \cdot 11$ are irreducible in $H$.  Since no irreducible element can be omitted, we have $\omega_H(x) = 4$.  
\end{example}

\begin{example}\label{e:mcnuggetomega}
Let $M \subset \NN$ denote the McNugget Monoid from Example~\ref{e:mcnugget}, with irreducibles $6$, $9$ and $20$.  We claim $\omega_M(6) = 3$.  
Fix a factorization $u_1 + \cdots + u_r$ with $r > 3$ and each $u_i \in \{6, 9, 20\}$, and suppose $6$ divides $u_1 + \cdots + u_r$ in $M$
(that is,  $u_1 + \cdots + u_r - 6 \in M$).  Consider the following cases.  
\begin{itemize}
\item[$\cdot$] 
If any $u_i = 6$, then omitting the remaining elements from the factorization $u_1 + \cdots + u_r$ leaves $u_i - 6 = 0 \in M$.  

\item[$\cdot$] 
If any two elements $u_i$ and $u_j$ are both $9$, then dropping the remaining elements leaves $u_i + u_j - 6 = 12 \in M$.  

\item[$\cdot$]
If three of the elements $u_i$, $u_j$ and $u_k$ equal 20, then omitting all remaining summands leaves $u_i + u_j + u_k - 6 = 54 \in M$.  

\end{itemize}
Since $9 + 20 + 20 - 6 \notin M$, at least one of the above conditions must hold.  Thus, we can omit summands from any factorization 
$u_1 + \cdots + u_r$ that $6$ divides until either a single $6$, two $9$'s or three $20$'s remain, and $6$ will still divide the result.  
This leaves at most 3 summands, so $\omega_M(6) = 3$.  
\end{example}

\subsection{$\omega$-primality and bullets}

Notice that in order to compute $\omega(6)$ in Example~\ref{e:mcnuggetomega}, it suffices to find all collections of irreducibles with the following property:  the product of this collection of irreducibles is divisible by $6$ (in the additive sense), and if any irreducible is omitted from the product, it is no longer divisible by $6$.   Such factorizations are called bullets (Definition~\ref{d:bullet}), 
and the $\omega$-function can be equivalently defined in terms of bullets (Proposition~\ref{p:bullet}).  This characterization will play 
a crucial role in Algorithm~\ref{a:numerical}, used for computing $\omega$-values in numerical monoids.

\begin{defn}\label{d:bullet}
Fix an element $x$ in a commutative, cancellative, atomic monoid $M$.  
A \emph{bullet} for $x$ is a product $u_1 \cdots u_r$ of irreducible 
elements $u_1, \ldots, u_r \in M$ such that 
\begin{enumerate}
\item $x$ divides the product $u_1 \cdots u_r$, and 
\item for each $i \le r$, $x$ does not divide $u_1 \cdots u_r / u_i$.  
\end{enumerate}
The set of bullets of $x$ is denoted $\bul(x)$.  
\end{defn}

\begin{remark}
For a more general (not necessarily cancellative) commutative monoid $M$, expressions of the form $ab/a$ for elements $a, b \in M$ need not be well-defined, 
since some element $c$ other than $b$ could satisfy $ac = ab$.  However, the expression $u_1 \cdots u_r / u_i$ appearing in Definition~\ref{d:bullet}(b) is 
well defined and equals $u_1 \cdots u_{i-1}u_{i+1} \cdots u_r$ since the monoid $M$ is cancellative.  In general, much of our intuition for factorization theory 
breaks down in a non-cancellative setting, so the cancellative hypothesis in Definition~\ref{d:omega} (as well as for most other factorization invariants) is crucial.  
\end{remark}

A more general form of Proposition~\ref{p:bullet} appeared as \cite[Proposition~3.3]{semitheor}.  

\begin{prop}\label{p:bullet}
Given any commutative, cancellative, atomic monoid $M$, 
$$\omega(x) = \sup\{r : u_1 \cdots u_r \in \bul(x), u_i \emph{\text{ irreducible}}\}$$
for each element $x \in M$.  
\end{prop}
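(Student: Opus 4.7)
The plan is to prove the two inequalities $\omega(x)\ge N$ and $\omega(x)\le N$ separately, where $N:=\sup\{r:u_1\cdots u_r\in\bul(x)\}\in\NN^*\cup\{\infty\}$. The linchpin is a strengthening of Definition~\ref{d:bullet}(b): if $u_1\cdots u_r\in\bul(x)$, then $x$ does not divide $\prod_{i\in S}u_i$ for \emph{any} proper subset $S\subsetneq\{1,\ldots,r\}$. Indeed, picking any $j\notin S$ gives $\prod_{i\in S}u_i \mid u_1\cdots u_r/u_j$, so if $x$ divided the former it would divide the latter, contradicting (b). Thus a bullet is precisely a product minimal (as a multiset of irreducibles) among those divisible by $x$. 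This observation is the bridge between the ``subset-selection'' viewpoint of Definition~\ref{d:omega} and the ``irredundant cover'' viewpoint of bullets.

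For $\omega(x)\ge N$, I would argue by contradiction: suppose $\omega(x)=m$ and there is a bullet $u_1\cdots u_r$ with $r>m$. Since $x\mid u_1\cdots u_r$, Definition~\ref{d:omega} supplies $T\subset\{1,\ldots,r\}$ with $|T|\le m<r$ such that $x\mid\prod_{i\in T}u_i$. Then $T$ is a proper subset, directly contradicting the strengthened bullet property above. Hence no bullet can have length exceeding $\omega(x)$, so $N\le\omega(x)$; this also forces $\omega(x)=\infty$ in the case $N=\infty$, since bullets of arbitrary length would defeat every finite candidate for $\omega(x)$.

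For the reverse inequality (when $N$ is finite; the infinite case is already covered), I would show that $N$ itself satisfies the defining property in Definition~\ref{d:omega}. Given any product of irreducibles $q_1\cdots q_r$ divisible by $x$, iteratively delete any single factor whose removal preserves divisibility by $x$; since $r$ is finite, the process terminates at a sub-product $\prod_{i\in T}q_i$ that is, by construction, a bullet for $x$. By definition of $N$ we have $|T|\le N$, so whenever $r>N$ we have produced the required small witnessing subset. This verifies that $N$ has the property characterizing $\omega(x)$, and since $\omega(x)$ is the \emph{smallest} such positive integer, $\omega(x)\le N$.

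The one genuine subtlety is the bookkeeping around $\infty$ and around the word ``smallest'' in Definition~\ref{d:omega}: I need to be sure that the iterative reduction in the third step yields the exact defining property (not merely some weaker bound) and that the first step handles $N=\infty$ without circular reasoning. Both issues dissolve once the strengthened minimality property of bullets is in place, so that step — though trivial — is the keystone of the argument.
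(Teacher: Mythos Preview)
Your proof is correct and follows essentially the same approach as the paper's: both directions hinge on the observation that bullets are exactly the multiset-minimal irreducible products divisible by $x$, and the reverse inequality is obtained by iteratively stripping factors until a bullet remains. Your write-up is in fact more careful than the paper's on two points---you make the ``strengthened bullet property'' (no proper sub-multiset works) explicit, and you spell out the iteration in the $\omega(x)\le N$ direction---whereas the paper leaves both of these implicit.
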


\begin{proof}
First, suppose $\sup\{r : u_1 \cdots u_r \in \bul(x), u_i {\text{ irreducible}}\} = \infty$.  Then there exists bullets 
$u_1 \cdots u_r$, which $x$ divides but no $u_i$ can be omitted, for arbitrarily large $r$.  
By definition, this means $\omega(x) = \infty$.  

Conversely, let $m = \max\{r : u_1 \cdots u_r \in \bul(x), u_i \,\,{\text{irreducible}}\}$.  For any bullet $u_1 \cdots u_r \in \bul(x)$, 
$x$ divides $u_1 \cdots u_r$, but omitting any $u_i$ yields a product which $x$ does not divide, 
so $\omega(x) \ge r$.  This means $\omega(x) \ge m$.  Now fix $r > m$.  Any product 
$u_1 \ldots u_r$ of irreducibles which $x$ divides is not a bullet, so there is some $u_i$ with 
$x \mid u_1 \cdots u_r / u_i$.  This means $\omega(x) < r$, which proves the desired equality.  
\end{proof}

\begin{example}\label{e:bullet}
Let $M = \{6a + 9b + 20c : a,b,c \in \NN\}$ denote the McNugget Monoid introduced in Example~\ref{e:mcnugget} and consider again $6 \in M$.  
As a collection of irreducible elements, any bullet $B = u_1 + \cdots + u_r$ for $6$ is a factorization (of some other element in $M$) such 
that $(u_1 + \cdots + u_r) - 6 \in M$ but removing any $u_i$ from $B$ would be no longer divisible by $6$.  In fact, the only bullets for $6$ 
are the expressions $6$, $9 + 9$ and $20 + 20 + 20$.  Since the latter has maximal length, Proposition~\ref{p:bullet} yields $\omega_M(6) = 3$.

It is worth noting that a bullet is not just an element in $M$, but a specific sum of irreducibles.  Let $B$ denote 
the bullet $20 + 20 + 20$ for $6$.  The expression $6 + \cdots + 6$ consisting of 10 copies of 6 has the same value 
in $M$ as $B$ (namely, 60), but this sum is not a bullet for $6$.  Indeed, omitting any summand (or all but one of 
them, for that matter) yields a sum that $6$ still divides in $M$.  Additionally, we verify that $B$ is a bullet for 
$6$ by checking that $(20 + 20 + 20) - 20 - 6 = 34 \notin M$.  Even though $(20 + 20 + 20) - 9 - 6$ does lie in $M$, 
$9$ is not one of the irreducibles comprising $B$.  

\end{example}

We now give some basic properties of the $\omega$-function, which appeared as \cite[Theorems~2.3 and~2.4]{prime} and \cite[Proposition~2.1]{andalg}.  
Notice that both definitions of $\omega$ are used in the proof of Proposition~\ref{p:basicprops}.  

\begin{prop}\label{p:basicprops}
Fix a commutative, cancellative, atomic monoid $M$.  
\begin{enumerate}

\item 
The set $\{\omega(x) : x \in M\}$ is unbounded.  

\item 
$\omega$ is subadditive, that is, $\omega(xy) \le \omega(x) + \omega(y)$ for all $x, y \in M$.  

\item 
If $p \in M$ is prime, then $\omega(px) = \omega(x) + 1$ for all $x \in M$.  

\end{enumerate}
\end{prop}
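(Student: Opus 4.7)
The plan is to address the three parts with complementary techniques, drawing on the divisibility definition (Definition~\ref{d:omega}) and the bullet characterization (Proposition~\ref{p:bullet}) as convenient.

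For part~(1), I would fix any irreducible $u \in M$ (which exists by atomicity) and observe that for each $n \ge 1$ the product of $n$ copies of $u$ is itself a bullet for $u^n$: divisibility $u^n \mid u^n$ is trivial, while cancellativity combined with $u$ being a non-unit rules out $u^n \mid u^{n-1}$, so omitting any copy destroys divisibility. Proposition~\ref{p:bullet} then gives $\omega(u^n) \ge n$, and since $n$ is arbitrary the set $\{\omega(x) : x \in M\}$ is unbounded.

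For part~(2), let $B = u_1 \cdots u_n$ be a bullet for $xy$; by Proposition~\ref{p:bullet} it suffices to show $n \le \omega(x) + \omega(y)$, and we may assume both values on the right are finite. Assuming $n > \omega(x)$ (else trivially done), the divisibility definition of $\omega(x)$ applied to $x \mid B$ supplies a subset $T \subseteq \{1, \ldots, n\}$ with $|T| \le \omega(x)$ and $\prod_{i \in T} u_i = xc$ for some $c \in M$. Setting $S = \{1, \ldots, n\} \setminus T$, cancellation in $xy \mid xc\prod_{i \in S} u_i$ gives $y \mid c\prod_{i \in S} u_i$, while the bullet property of $B$ combined with the same cancellation forces $y \nmid c\prod_{i \in S \setminus \{j\}} u_i$ for every $j \in S$. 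Factoring $c = c_1 \cdots c_\ell$ into irreducibles via atomicity and applying the definition of $\omega(y)$ to the full product $c_1 \cdots c_\ell \prod_{i \in S} u_i$ produces a sub-product of length at most $\omega(y)$ that $y$ divides. The key technical point, and where I expect the main obstacle to lie, is showing that this sub-product must contain every $u_i$ with $i \in S$: if some $u_j$ were omitted, the result would sit inside $c\prod_{i \in S \setminus \{j\}} u_i$, which $y$ does not divide. Consequently $|S| \le \omega(y)$, so $n = |T| + |S| \le \omega(x) + \omega(y)$.

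For part~(3), the upper bound $\omega(px) \le \omega(x) + 1$ follows immediately from part~(2) together with $\omega(p) = 1$ for $p$ prime. For the matching lower bound, I would take a bullet $C = u_1 \cdots u_m$ for $x$ with $m = \omega(x)$ and verify that $pC$ is a bullet for $px$ of length $m+1$. The divisibilities $px \mid pC$ and $px \nmid pC/u_i$ reduce under cancellation to the bullet properties $x \mid C$ and $x \nmid C/u_i$ of $C$; removing the prefixed $p$ demands $px \nmid C$, which holds either because $p \nmid C$, or—if $p$ occurs as some $u_i$, forced by primality—because the bullet property of $C$ gives $x \nmid C/p$. Proposition~\ref{p:bullet} then yields $\omega(px) \ge m + 1 = \omega(x) + 1$, and the $\omega(x) = \infty$ case is handled by the same construction applied to arbitrarily long bullets.
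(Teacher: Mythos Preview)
Your proposal is correct. Part~(1) matches the paper exactly: both observe that $u^n$ is a bullet for itself, giving $\omega(u^n)\ge n$. Parts~(2) and~(3), however, take genuinely different routes from the paper. For~(2) the paper argues directly from Definition~\ref{d:omega}: given $xy\mid u_1\cdots u_r v_1\cdots v_s$ with $r=\omega(x)$ and $s>\omega(y)$, it asserts the existence of subsets $T,U$ with $xy\mid(\prod_T u_i)(\prod_U v_j)$, whereas you work via Proposition~\ref{p:bullet}, bounding the length of an arbitrary bullet for $xy$. Your argument is longer but more transparent: the step of factoring the quotient $c$ into irreducibles and then forcing every $u_i$ with $i\in S$ into the $\omega(y)$-subset is exactly the technical content that the paper's one-line version leaves implicit. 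For~(3) the paper obtains the inequality $\omega(px)\le\omega(x)+1$ by showing that every bullet $u_1\cdots u_r$ for $px$ has some $u_j$ associate to $p$, so that deleting it yields a bullet for $x$ of length $r-1$; the reverse inequality is not spelled out. You instead get the upper bound for free from part~(2) and supply the lower bound by the complementary construction, showing that prefixing $p$ to a maximal bullet $C$ for $x$ produces a bullet for $px$. The two arguments are dual---the paper strips $p$ off bullets of $px$, you adjoin $p$ to bullets of $x$---and your version has the advantage of making both inequalities explicit, including the $\omega(x)=\infty$ case.
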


\begin{proof}
For any $x \in M$, each factorization of $x$ is a bullet for $x$.  As such, $\omega(x)$ is bounded below by the 
largest length of a factorization of $x$.  Thus, any irreducible $u \in M$ satisfies $\omega(u^k) \ge k$ for $k \ge 1$, 
so $\{\omega(x) : x \in M\}$ is unbounded.  

Now, fix two elements $x, y \in M$, and suppose $xy \mid u_1 \cdots u_r v_1 \cdots v_s$ for $r = \omega(x)$ and $s > \omega(y)$.  
Then for some subsets $T \subset \{1, \ldots, r\}$ and $U \subset \{1, \ldots, s\}$ with $|T| \le \omega(x)$ and $|U| \le \omega(y)$, 
we have $xy \mid (\prod_{i \in T} u_i)(\prod_{j \in U} v_j)$.  This proves the desired inequality.  

Lastly, fix $p \in M$ prime and $x \in M$.  Any bullet $u_1 \cdots u_r$ for $px$ gives $p \mid u_1 \cdots u_r$, 
so $p \mid u_j$ for some $j$, say $j = 1$.  Since $u_1$ is irreducible, $p = au_1$ for some unit $a$.  
Cancellativity of $M$ yields $x \mid u_2 \cdots u_r$, so $u_2 \cdots u_r$ is a bullet for $x$.  
This proves $\omega(px) = \omega(x) + 1$.  
\end{proof}

\subsection{Infinite $\omega$-values}

In general, the $\omega$-primality function can take the value $\infty$ for some elements, as Example~\ref{e:infiniteomega} demonstrates.   
With the exception of this example, all of monoids in this paper attain only finite $\omega$-values.  

\begin{example}[{\cite[Example~4.7]{tame}}]\label{e:infiniteomega}
Fix an irrational number $\alpha > 1$ and let 
$$A = \{(x,y) \in \NN^2 : y \le \alpha x\} \subset \NN^2$$ 
denote the additive submonoid of $\NN^2$ consisting of the points below the line $y = \alpha x$.  
The element $(1,1) \in M$ has infinite $\omega$-value.  The idea of the proof is that, by the irrationality of $\alpha$, the line $y = \alpha x$ 
contains no integer points other than the origin; however, integer points in $A$ can lie arbitrarily close to it.  
In particular, $(1,1)$ divides some multiple of every element of $A$, but for each $n \in \NN$, there exists 
an irreducible $u_n \in A$ close enough to the line $y = \alpha x$ that $nu_n - (1,1) \notin A$ as well.  This 
bounds the $\omega$-value of $(1,1)$ below by $n$ for all $n \in \NN$.  Proposition~\ref{p:basicprops}.2 implies 
that every nonzero element of $A$ also has infinite $\omega$-value.  See \cite[Example~4.7]{tame} for the full argument.  
\end{example}

\subsection{$\omega$-values in Block and Krull monoids}\label{s:block}

Let $G$ be an Abelian group, written additively.  The \emph{free commutative monoid over $G$} is the set of unordered, finite sequences of terms from $G$, where repetition of terms is allowed.  Formally, this is given by $$\mathcal F(G) = \{g_1^{a_1} \cdots g_r^{a_r} \, : \, g_i \in G \,\, \text{distinct}, a_i \in \NN\}.$$ The elements of $\mathcal F(G)$ can be thought of as monomials whose variables are the elements of $G$ and whose multiplication is given by usual monomial multiplication.  As a monoid, $\mathcal F(G)$ only depends on $|G|$; in particular, if $|G| = n$ is finite, then $\mathcal F(G) \cong \NN^n$ under the map that sends $g_i$ to the $i$-th standard basis vector in $\NN^n$.  However, the following submonoid of $\mathcal F(G)$, called the block monoid $\mathcal B(G)$, does depend on $G$ and is used extensively in additive number theory.

\begin{defn}\label{d:blockmonoid}
Fix an Abelian group $G$ with identity $0 \in G$, and let $\mathcal F(G)$ denote the free commutative monoid over $G$.  The \emph{block monoid} $\mathcal B(G)$ is the submonoid $$\mathcal B(G) = \{g_1^{a_1} \cdots g_r^{a_r} \in \mathcal F(G) \, : \, a_1g_1+ \cdots + a_rg_r = 0 \in G\} \subset \mathcal F(G),$$ whose elements (called \emph{blocks} or \emph{zero-sum sequences}) evaluate to the identity upon applying the group operation.  
\end{defn}

For a full treatment of block monoids, see~\cite{block,block_delta,nonuniq}.

\begin{example}\label{e:blockmonoid}
Consider a cyclic, Abelian group $G = \{0, g, -g\} \cong \mathbb Z_3$ of order $3$ and its corresponding block monoid $\mathcal B(G)$.   The block monoid $\mathcal B(G)$ can be written as $$\mathcal B(G) = \{0^{a_0}g^{a_1} (-g)^{a_2} \, : \, a_0\! \cdot\! 0 + a_1\! \cdot\! g + a_2 \!\cdot\! (-g) = 0 \in G \}.$$  The above algebraic condition for membership in $\mathcal B(G)$ can further be reduced to the number-theoretic condition $a_1 \equiv a_2 \bmod 3$.  A quick computation shows that the four irreducible elements (also called \emph{minimal zero-sum sequences}) in this monoid are precisely $0, g^3, (-g)^3,$ and $g(-g)$, where $0$ is the unique prime element.  This monoid has non-unique factorizations as, for example, $g^3(-g)^3$ can be factored as $(g^3)((-g)^3)$ or $(g(-g))^3$.   Computing the $\omega$-values for these irreducible elements, we obtain the following:

\begin{itemize}
\item[$\cdot$] $\omega(0) = 1$ as $0$ is its unique (maximal) bullet of length $1$;

\item[$\cdot$] $\omega(g^3) =  \omega((-g)^3) = 3$ as $(g(-g))^3$ is a maximal bullet of length $3$;

\item[$\cdot$] $\omega(g(-g)) = 2$ as $(g^3)((-g)^3)$ is a maximal bullet of length $2$.
\end{itemize}
\end{example}

It is not a coincidence that, for these irreducible elements in $\mathcal B(G)$, the $\omega$-value coincides with the number of ``copies'' of the group elements in that block.  The following proposition establishes this fact and is proved more generally in~\cite{origin}.

\begin{prop}\label{p:block}
Let $G \not \cong \ZZ_2$ be an Abelian group and $\mathcal B(G)$ its block monoid.  Every irreducible element $u = g_1^{a_1} \cdots g_k^{a_k} \in \mathcal B(G)$ with $n = a_1 + \cdots + a_k$ satisfies $\omega(u) = n$.  
\end{prop}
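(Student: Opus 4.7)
The plan is to apply Proposition~\ref{p:bullet} and bound the longest bullet for $u$ from both sides. Write $u = g_1 g_2 \cdots g_n$ with the letters listed according to multiplicity, and for each $h \in G$ and each $w \in \mathcal B(G)$ let $c_h(w)$ denote the multiplicity of $h$ in $w$, so that divisibility $u \mid w$ in $\mathcal B(G)$ is equivalent to $c_h(w) \geq c_h(u)$ for every $h \in G$.

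For the upper bound $\omega(u) \leq n$, I would start from any bullet $v_1 \cdots v_r \in \bul(u)$ and set $w = v_1 \cdots v_r$. The bullet condition furnishes, for each $j$, an element $h_j \in \supp(u)$ with $c_{h_j}(w) \geq c_{h_j}(u)$ but $c_{h_j}(w) - c_{h_j}(v_j) < c_{h_j}(u)$, forcing $c_{h_j}(v_j) \geq c_{h_j}(w) - c_{h_j}(u) + 1$. Partitioning indices $j$ by the value of $h_j$ and using $\sum_j c_h(v_j) = c_h(w)$ for every $h$, a short double-counting step (valid because $c_h(u) \geq 1$ whenever the corresponding index set is nonempty) shows that $|\{j : h_j = h\}| \leq c_h(u)$ for every $h \in \supp(u)$. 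Summing over $h$ yields $r \leq \sum_h c_h(u) = n$.

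For the lower bound $\omega(u) \geq n$, I would exhibit a bullet of length $n$ by attaching to each of the $n$ letters $g_j$ of $u$ an irreducible zero-sum block $v_j$ that contains $g_j$ and has the property that deleting $v_j$ strictly drops the multiplicity of $g_j$ in $v_1 \cdots v_n$ below $c_{g_j}(u)$. When $\supp(u)$ and $-\supp(u)$ are disjoint, the direct choice $v_j = g_j \cdot (-g_j)$ already works. When $\supp(u)$ is closed under negation, as in $u = g(-g) \in \mathcal B(\ZZ_3)$ whose length-$2$ bullet $g^3 \cdot (-g)^3$ appears in Example~\ref{e:blockmonoid}, the copies of $-g_j$ contributed by other blocks ruin the naive choice, and one instead uses either ``pure'' blocks of the form $g_j^{\mathrm{ord}(g_j)}$ or irreducible blocks padded with letters from $G \setminus \supp(u)$ summing to $-g_j$; verifying that these constructions combine to form a bullet of length $n$ is the substantive step and requires a case analysis on the structure of $\supp(u)$ inside $G$.

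The hypothesis $G \not\cong \ZZ_2$ is used exclusively in the lower bound, since in $\mathcal B(\ZZ_2)$ every zero-sum sequence has an even number of copies of the nontrivial element $g$, forcing the only non-identity irreducible $g^2$ to be prime, so $\omega(g^2) = 1 < 2 = n$. The main obstacle of the proof is therefore the lower-bound construction: the blocks $v_j$ must be chosen so that each carries a genuinely critical copy of a letter of $u$ without the remaining blocks silently replenishing it, and the exclusion of $\ZZ_2$ is precisely what guarantees enough ``extra'' elements of $G$ (either outside $\supp(u)$, or of order greater than $2$) to arrange this. Once the blocks are in hand, both $u \mid v_1 \cdots v_n$ and the bullet property follow from direct multiplicity counts, and together with the upper bound this yields $\omega(u) = n$.
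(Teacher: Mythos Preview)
Your overall strategy matches the paper's: bound $\omega(u)$ above by $n$ via divisibility/multiplicity considerations, and below by exhibiting an explicit bullet of length $n$. Your upper bound is complete and correct; it is a slight variant of the paper's argument (you bound bullet length directly, whereas the paper works from Definition~\ref{d:omega} by choosing, for each of the $n$ letters of $u$, a factor $u_j$ containing it), but the double-counting you describe does give $|\{j : h_j = h\}| \le c_h(u)$.

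The gap is in the lower bound: you correctly identify that the case analysis on $\supp(u)$ is ``the substantive step,'' but you do not carry it out, and your proposed dichotomy (``$\supp(u)\cap(-\supp(u))=\varnothing$'' versus ``$\supp(u)$ closed under negation'') is neither exhaustive nor the efficient one. The paper's key observation is that irreducibility of $u$ already does most of the work: if $|g|>2$ and both $g,-g\in\supp(u)$, then $g(-g)$ is a zero-sum subsequence, forcing $u=g(-g)$; if $|g|=2$ and $g$ has multiplicity $\ge 2$ in $u$, then $g^2$ is a zero-sum subsequence, forcing $u=g^2$. Hence apart from these two exceptional shapes, every $g_i$ with $|g_i|>2$ has $-g_i\notin\supp(u)$ and every $g_i$ with $|g_i|=2$ has $a_i=1$, so the naive choice $v_j=g_j(-g_j)$ already gives a length-$n$ bullet (each $g_i$ occurs exactly $a_i$ times in $\prod_j v_j$). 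The two exceptions are then handled individually: for $u=g(-g)$ with $|g|\ne 2$ one uses the pure blocks $g^{|g|}$ and $(-g)^{|g|}$, and for $u=g^2$ with $|g|=2$ one takes an auxiliary $g'\notin\langle g\rangle$ (this is exactly where $G\not\cong\ZZ_2$ is used) and squares the irreducible block $g\cdot g'\cdot(-(g+g'))$. Filling in these three short cases is what is missing from your sketch.
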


\begin{proof}
Fix an irreducible element $u = g_1^{a_1} \cdots g_k^{a_k} \in \mathcal B(G)$, with $g_i \ne g_j$ for $i \ne j$, and let $n = a_1 + \cdots + a_k$.  First, suppose $u \mid u_1 \cdots u_m$ for elements $u_1, \ldots, u_m \in \mathcal B(G)$.  Writing $ux = u_1 \cdots u_m$ for some $x \in \mathcal B(G)$, each $g_i$ in $u$ must occur in some $u_j$.  Comparing left and right sides of the equation, we can find a subset $T \subset \{1, \ldots, m\}$ with $|T| \le n$ such that $u \mid \prod_{j \in T} u_j$.  Since $|T| \le n$, this shows that $\omega(u) \le n$.  

Next, we construct a bullet for $u$ of length $n$.  If $u = g(-g)$ for some $g \in G$ with $|g| \ne 2$, then $\left(g_i^{|g_i|}\right)\left((-g_j)^{|g_j|}\right)$ is a maximal bullet for $u$.  If $u = g^2$ with $|g| = 2$, then since $G \not \cong \ZZ_2$, there exists an element $g' \notin \<g_i\> \subset G$, and $(g_i)(g')(-(g_ig'))$ is a maximal bullet for $u$.  For the remaining cases, if $|g_i| = 2$, then $a_i = 1$, and if $|g_i| > 2$, then $-g_i$ does not occur in $u$.  This means $\left(g_1(-g_1)\right)^{a_1} \cdots \left(g_k(-g_k)\right)^{a_k}$ is a bullet for $u$ of length $n$, since each $g_i$ occurs exactly $a_i$ times.  This completes the proof.  
\end{proof}

Block monoids fall into a larger, important category of algebraic objects called \emph{Krull monoids}.  A Krull monoid $M$ is a submonoid of a factorial monoid $D$ (i.e., a monoid where every element has a unique irreducible factorization) such that for each $a, b \in M$, we have $a \mid_M b$ implies $a \mid_D b$.  An analogue of Proposition~\ref{p:block} is true for Krull monoids satisfying certain conditions, where the $\omega$-value of an irreducible element $u \in M$ is the (unique) factorization length of $u$ in $D$.   See~\cite{origin, tame} for more details.

\begin{remark}\label{r:block}
While Proposition~\ref{p:block} easily finds the $\omega$-values for the irreducible elements of block monoids, no known formula exists for $\omega$-values of reducible elements.  Consider, for example, $g^3(-g)^3 \in \mathcal B(G)$, where $G$ is a cyclic, Abelian group of order $3$.  The bullets for this element are $(g^3)((-g)^3)$ and $(g(-g))^3$, with the latter being maximal.  Thus, $\omega(g^3(-g)^3) = 3$ even though its factorization length in the factorial monoid $\mathcal F(G)$ is $6$.  Therefore, Proposition~\ref{p:block} cannot in general be extended to provide a computation of $\omega$-values for reducible elements in block monoids.  See Problem~\ref{p:blockmonoid} for more details.
\end{remark}

\begin{remark}\label{r:irreducibles}
Much of the seminal literature on $\omega$-primality focuses on understanding $\omega$-values for irreducible elements in large classes of cancellative, commutative, atomic monoids.  In such monoids, the presence of non-unique factorizations is equivalent to the existence of irreducible elements which are not prime; thus, measuring ``how far from prime" irreducibles are was a natural course of study.  In fact, understanding $\omega$-values for irreducible elements has provided a deeper understanding of the important concept of tameness of monoids.  See \cite{tame} for a definition of tameness and for details on its connection to $\omega$-primality.
\end{remark}

\section{$\omega$-values in numerical monoids}\label{s:numerical}

The Chicken McNugget monoid discussed in Section~\ref{s:general} is one example of an important class of algebraic objects called numerical monoids.  Much is known about the algebraic structure of numerical monoids, and a deeper understanding of their factorization theory has emerged as a result of studying their $\omega$-values.

\subsection{$\omega$-primality and bullets in numerical monoids}

We begin with basic definitions related to numerical monoids.  See~\cite{numerical} for a full treatment.

\begin{defn}\label{d:numerical_monoid}
Fix relatively prime positive integers $n_1, n_2, \ldots, n_k \in \NN$, and consider the subset of $\NN$ defined by 
$$\langle n_1, n_2, \ldots, n_k \rangle = \{c_1n_1 + c_2n_2 + \cdots + c_kn_k  : c_i \in \mathbb N \}.$$  
Along with the additive structure inherited from $\NN$, this set forms the \emph{numerical monoid generated by $\{n_1, n_2, \ldots, n_k\}$}.  
\end{defn}

\begin{remark}\label{r:generating_set}
The elements $n_1, n_2, \ldots, n_k$ in Definition~\ref{d:numerical_monoid} are referred to as a \emph{generating set} for $\Gamma$.  
Even though different generating sets may yield the same numerical monoid, it can be shown that there exists a \emph{unique} minimal 
generating set (with respect to set-theoretic inclusion) for each numerical monoid.  In this paper, we will always assume that the 
given generating set for a numerical monoid is minimal, since the unique minimal generating set can be obtained from any non-minimal 
generating set by simply omitting the redundant generators. 

 Furthermore, the assumption in Definition~\ref{d:numerical_monoid} that that $\gcd(n_1, n_2, \ldots, n_k) = 1$ assures that the the numerical monoid is co-finite in $\NN$.  In cases where the generators satisfy $\gcd(n_1, n_2, \ldots, n_k) = d > 1$, the resulting monoid is a submonoid of $d\NN$; such a monoid, though, is naturally isomorphic to the co-finite numerical monoid generated by $\left\{n_1/d, n_2/d, \ldots, n_k/d\right\}.$
\end{remark}

The elements in the unique minimal generating set for a numerical monoid are precisely its irreducible elements.  
Since there are only finitely many irreducibles, we are able to represent a factorization of an element by simply counting how many times 
each generator appears.  This motivates Definition~\ref{d:factorization}.  

\begin{defn}\label{d:factorization}
Let $\Gamma$ be a numerical monoid minimally generated by $n_1, n_2, \ldots, n_k$, 
and fix $n \in \Gamma$.  A vector $\vec a = (a_1, a_2, \ldots, a_k) \in \NN^k$ is a 
\emph{factorization} of $n$ if $n = \sum_{i=1}^k a_in_i.$  The \emph{length} of the 
factorization $\vec a$, denoted by $|\vec a|$, is given by $|\vec a| = \sum_{i=1}^k a_i$. 
\end{defn}
 
Much of the recent work on understanding the $\omega$-function on numerical monoids 
takes advantage of these factorization representations, and many of the definitions 
described in Section~\ref{s:general} have more tractable descriptions in the 
numerical monoid setting.  In particular, the definition of $\omega$-primality and bullets
of an element in a numerical monoid have more straightforward descriptions.

\begin{defn}\label{d:omega_numerical}
Let $\Gamma = \langle n_1, n_2, \ldots, n_k \rangle$ be a numerical monoid 
with irreducible elements $n_1, n_2, \ldots, n_k$.  For $n \in \Gamma$, 
define $\omega(n) = m$ if $m$ is the smallest positive integer with the 
property that whenever $\vec a \in \NN^k$ satisfies $\sum_{i=1}^k a_in_i - n \in \Gamma$ 
with $|\vec a| > m$, there exists a $\vec b \in \NN^k$ with $b_i \leq a_i$ for each $i$ 
such that $\sum_{i=1}^k b_i n_i - n \in \Gamma$ and $|\vec b| \leq m$.  
We say that $\vec a \in \NN^k$ is a \emph{bullet} for $n$ if 
$\left(\sum_{i=1}^n a_i n_i\right) - n \in \Gamma$ and 
$(\sum_{i=1}^k a_in_i) - n_j - n \not \in \Gamma$ whenever $a_j > 0$.  
A bullet $\vec a$ for $n$ is \emph{maximal} if $|\vec a| = \omega(n)$.
\end{defn}

\begin{remark}
The authors of~\cite{tame} prove that a certain category of monoids, which includes finitely-generated monoids, attain only finite $\omega$-values.  Thus, $\omega(n) < \infty$ for all $n$ in a numerical monoid.  Therefore, by Proposition~\ref{p:bullet}, a maximal bullet is always guaranteed to exist for every element in a numerical monoid.
\end{remark}

\begin{example}\label{e:3-7_bullets}
Consider the numerical monoid $$\Gamma = \langle 3, 7 \rangle = \{0,3,6,7,9,10,12,13,14,15,\ldots\}.$$
The element $9 \in \Gamma$ has only two bullets: $(3,0)$ and $(0,3)$, both of which are maximal.  Therefore, $\omega(9) = 3$.
\end{example}

In numerical monoids, the set of bullets for a fixed element $n$ lies in a bounded subset of $\NN^k$ that 
can be easily computed.  We describe how to find this bounded set in Lemma~\ref{l:bounded} and give 
Algorithm~\ref{a:numerical} for computing $\omega$-values in any numerical monoid.  Both Lemma~\ref{l:bounded} 
and Algorithm~\ref{a:numerical} appeared in~\cite{andalg}.  

\begin{lemma}\label{l:bounded}
Fix a numerical monoid $\Gamma = \langle n_1, n_2, \ldots, n_k \rangle$ and $n \in \Gamma$.  
\begin{enumerate}
\item%
For every $1 \leq i \leq k$, there exists $b_i > 0$ such that $b_i\vec e_i$ is a bullet for $n$, where $\vec e_i$ is the $i$-th unit vector. 

\item%
Every bullet $\vec a = (a_1,a_2, \ldots, a_k)$ for $n$ satisfies $a_i \leq b_i$ for all $i$.

\end{enumerate}
\end{lemma}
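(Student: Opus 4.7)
The plan is to define $b_i$ as the smallest positive integer satisfying $b_i n_i - n \in \Gamma$, and to show that this single quantity simultaneously produces the desired bullet in part (1) and serves as the upper bound in part (2). Since the lemma is only interesting for non-unit $n$, I assume $n > 0$ throughout.

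For part (1), Remark~\ref{r:generating_set} guarantees that $\Gamma$ is cofinite in $\NN$, so the set $\{b \in \NN^* : b n_i - n \in \Gamma\}$ is nonempty; I let $b_i$ denote its minimum. By minimality, $(b_i - 1) n_i - n \notin \Gamma$, where the boundary case $b_i = 1$ uses that $-n \notin \Gamma$ since $n > 0$. Because the only nonzero coordinate of $b_i \vec e_i$ is the $i$-th one, these two facts are precisely what is required for $b_i \vec e_i$ to be a bullet for $n$ according to Definition~\ref{d:omega_numerical}.

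For part (2), I argue by contradiction. Suppose a bullet $\vec a = (a_1, \ldots, a_k)$ for $n$ satisfies $a_i > b_i$ for some $i$. Then $a_i \geq 1$, so the bullet condition applied at index $i$ forces $(\sum_j a_j n_j) - n_i - n \notin \Gamma$. However, the inequality $a_i - 1 \geq b_i$ permits the decomposition
$$\left(\sum_{j=1}^{k} a_j n_j\right) - n_i - n \;=\; \sum_{j \neq i} a_j n_j \;+\; (a_i - 1 - b_i)\, n_i \;+\; (b_i n_i - n),$$
each of whose three summands lies in $\Gamma$ --- the first two as nonnegative integer combinations of generators, and the last by the defining property of $b_i$. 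This contradicts the bullet condition and forces $a_i \leq b_i$.

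The only genuine obstacle is bookkeeping: committing to the minimality of $b_i$ from the start so that the \emph{same} quantity discharges both the nondivisibility needed in (1) and the divisibility needed in (2). Once this choice is set up, the two parts reduce to short, almost automatic manipulations of generators of $\Gamma$, with cofiniteness supplying the existence step and the bullet definition supplying the contradiction in the bound.
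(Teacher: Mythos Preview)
Your proof is correct and follows essentially the same approach as the paper: define $b_i$ as the minimal positive integer with $b_i n_i - n \in \Gamma$, use minimality for part~(1), and use the very same three-term decomposition of $(\sum_j a_j n_j) - n_i - n$ for part~(2). The only cosmetic difference is in establishing nonemptiness of $\{b \in \NN^* : b n_i - n \in \Gamma\}$: you invoke cofiniteness of $\Gamma$, whereas the paper exhibits the explicit witness $b = n$ via $n \cdot n_i - n = n(n_i - 1) \in \Gamma$.
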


\begin{proof}
Consider the factorization $n\vec e_i$ corresponding to the expression $n\cdot n_i \in \Gamma$.  
Notice that $n \cdot n_1 - n = n(n_1 - 1) \in \Gamma$, so $n$ divides $n \cdot n_1$ in $\Gamma$.  
Choose $b_i$ minimal such that $b_in_i - n \in \Gamma$.  Minimality ensures that 
$b_in_i - n_i - n \notin \Gamma$, so $b_i\vec e_i$ is a bullet for $n$.  

Now, suppose $\vec a$ is a factorization with $a_i > b_i$ and $(\sum_{j=1}^k a_jn_j) - n \in \Gamma$.  
Since $b_in_i - n \in \Gamma$, we can write 
$$(\textstyle\sum_{j=1}^k a_jn_j) - n_i - n = (b_in_i - n) + (a_i - b_i - 1)n_i + \textstyle\sum_{j \ne i} a_jn_j \in \Gamma,$$
so $\vec a$ is not a bullet for $n$.  
\end{proof}

\begin{alg}\label{a:numerical}
For $n \in \Gamma = \<n_1, \ldots, n_k\>$ with each $n_i$ irreducible, compute $\omega(n)$.  












\begin{algorithmic}
\Function{OmegaPrimality}{$\Gamma$, $n$}
\ForAll{$i \le k$}
	\State Find $b_i$ minimal such that $b_in_i - n \in \Gamma$
\EndFor
\State $M \gets 0$ 
\ForAll{$\vec a \in \NN^k$ with $a_i \le b_i$ for all $i \le k$}
	\State $b \gets \sum_{i = 1}^k a_in_i$
	\If{$b - n \in \Gamma$ and $b - n - n_i \notin \Gamma$ whenever $a_i > 0$}
		\State $M \gets \max(M,\sum_{i = 1}^k a_i)$
	\EndIf
\EndFor
\State \Return $M$
\EndFunction
\end{algorithmic}
\end{alg}

\begin{remark}
It is worth noting that Algorithm~\ref{a:numerical} is highly inefficient, as the space searched for 
bullets is much larger than is required.  Significant improvements on the search space are discussed 
in~\cite[Remarks~5.9]{semitheor}, and an upcoming paper gives an algorithm for computing 
$\omega$-primality by inductively computing bullet sets.  Additionally, \cite[Algorithm~3]{compasymp} 
gives an algorithm for computing $\omega$-primality of elements in any finitely-generated monoid.  
\end{remark}

We now give some examples to see Algorithm~\ref{a:numerical} in action.  

\begin{example}\label{e:mcnugget_bullets}
Returning to the McNugget Monoid $M = \langle 6, 9, 20 \rangle$ from Example~\ref{e:mcnugget}, we can compute 
the three component bounds for $n = 35 \in M$.  Proceeding as in Algorithm~\ref{a:numerical}, the factorization 
$(14,0,0)$ has the property that $(14\cdot 6 + 0 \cdot 9 + 0 \cdot 20) - 35 = 49 \in M$, however 
$(14 \cdot 6 + 0 \cdot 9 + 0 \cdot 20) - 35 - 6 = 43 \not \in M$.  Thus, any bullet 
$\vec a = (a_1,a_2,a_3)$ for $35$ will have that $a_1 \leq 14$.  Similarly, $(0,9,0)$ and $(0,0,4)$ 
are bullets for $n = 35$, and therefore $a_2 \leq 9$ and $a_3 \leq 4$.  These component-wise bounds 
give a bounded region of $\NN^3$ in which the remaining bullets may occur.  Automating the search for 
the remaining bullets for $35$ in \texttt{Sage}~\cite{sage} produces the following:  
$$(14,0,0), (11,1,0), (8,3,0), (5,5,0), (2,7,0), (0,9,0), (4,0,1), (1,1,1), (0,3,1), (0,0,4).$$
Here, $(14,0,0)$ has length $14$ and is the unique maximal bullet.  Therefore, $\omega(35) = 14$.  
Notice that the computation of this $\omega$-value would be very difficult without the introduction of bullets.
\end{example}

\begin{example}\label{e:11-13-15_bullets}
Consider the numerical monoid $\Gamma = \<11,13,15\>$ with three irreducible elements and $58 \in \Gamma$.  Our algorithm, 
implemented in \texttt{Sage}, gives us that $(5,6,0)$ is a maximal bullet for $58$ (and thus $\omega(58) = 11$).  
Further investigation shows that for all $j \geq 0$, the element $58 + 11j \in \Gamma$ has $(5+j,6,0)$ as a maximal bullet.  
Therefore, maximal bullets need not have only one non-zero component (c.f., Examples~\ref{e:3-7_bullets} and~\ref{e:mcnugget_bullets}); 
in fact, these types of bullets can occur infinitely many times in a single numerical monoid.  See Problem~\ref{p:maxbullets}.  
\end{example}

\subsection{Numerical monoids with $2$ irreducible elements}

A complete understanding of the $\omega$-function for numerical monoids would include a closed form 
for $\omega(n)$ for any element in a numerical monoid $\Gamma$.  While this is a very lofty goal, 
some progress has been made in more manageable numerical monoids, such as those with only two irreducible 
elements.  Two separate results give different forms for $\omega(n)$ when $n \in \langle n_1, n_2 \rangle$.

\begin{thm}\label{t:2genomega}
Let $1< n_1 <n_2$ be relatively prime natural numbers, and consider the numerical monoid $\Gamma = \langle n_1, n_2 \rangle.$  

\begin{enumerate}[(a)]
\item \emph{(\cite[Theorem~4.4]{andalg})}
Let $(a_1,a_2)$ be a factorization for $n \in \Gamma$.  Then, 
$$\omega(n) = \max \left\{\left\lceil \frac{a_2}{n_1} \right\rceil n_2 + a_1, \left\lceil \frac{a_1}{n_2} \right\rceil n_1 + a_2\right\}.$$

\item \emph{(\cite[Theorem~4.4]{quasi})}
Let $n \in \Gamma$ be sufficiently large, and write $n = qn_1 + r$ with $0 \leq r < n_1$, 
and let $a \geq 0$ be minimal such that $an_1 \equiv r \bmod n_2$.  Then, $\omega(n) = q+a.$

\end{enumerate}
\end{thm}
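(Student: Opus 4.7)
The plan is to deduce part (a) from the bullet characterization of $\omega$ (Proposition~\ref{p:bullet}) by classifying all bullets for $n$ in $\Gamma = \langle n_1, n_2 \rangle$, and then to derive part (b) by applying part (a) to the ``reduced'' factorization of $n$ and simplifying under the large-$n$ hypothesis.

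For part (a), I would first classify bullets $\vec c = (c_1, c_2)$ for $n$ into three kinds: the \emph{pure} bullets $b_1 \vec e_1$ and $b_2 \vec e_2$, where $b_i$ is the smallest positive integer with $b_i n_i - n \in \Gamma$, and the factorizations of $n$ itself. If both $c_1, c_2 > 0$, setting $m = c_1 n_1 + c_2 n_2 - n$, the bullet condition in Definition~\ref{d:omega_numerical} requires $m \in \Gamma$ while $m - n_1 \notin \Gamma$ and $m - n_2 \notin \Gamma$; the only way $m \in \Gamma$ can admit no $\Gamma$-representation with a positive coordinate in either direction is $m = 0$, so $\vec c$ is a factorization of $n$. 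Uniqueness of each pure bullet follows because $k n_i - n \in \Gamma$ persists for every $k \ge b_i$. Next I would derive closed forms for $b_1$ and $b_2$: writing $b_1 n_1 - n = d_1 n_1 + d_2 n_2$ with $d_i \ge 0$, minimality forces $d_1 = 0$ and $d_2$ minimal with $n_1 \mid a_2 + d_2$, giving $b_1 = a_1 + n_2 \lceil a_2/n_1 \rceil$, and symmetrically $b_2 = a_2 + n_1 \lceil a_1/n_2 \rceil$. A short trade argument using $(a_1, a_2) \leftrightarrow (a_1 + n_2, a_2 - n_1)$ shows that the longest factorization of $n$ has $0 \le a_2 < n_1$ and length at most $a_1 + n_2 \le b_1$, so $\omega(n) = \max\{b_1, b_2\}$, which is exactly the stated formula.

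For part (b), I would apply part (a) to the reduced factorization $(a_1, a_2)$ of $n$ with $0 \le a_2 < n_1$. For $n$ sufficiently large, $a_1$ is large and $a_1 + n_2$ dominates $a_2 + n_1 \lceil a_1/n_2 \rceil$ (which grows in $a_1$ with slope $n_1/n_2 < 1$), so $\omega(n) = a_1 + n_2$ when $a_2 > 0$ and $\omega(n) = a_1$ when $a_2 = 0$. Writing $n = q n_1 + r$ with $0 \le r < n_1$ and using $a_1 n_1 + a_2 n_2 = n$, one solves $a_1 = q - (a_2 n_2 - r)/n_1$, whence
\[
\omega(n) = q + \frac{n_2(n_1 - a_2) + r}{n_1}.
\]
A direct check shows that $a^* := (n_2(n_1 - a_2) + r)/n_1$ is a non-negative integer satisfying $a^* n_1 \equiv r \pmod{n_2}$ and $0 \le a^* < n_2$; since this congruence has a unique solution in $[0, n_2)$, we have $a^* = a$, and therefore $\omega(n) = q + a$. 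The edge case $a_2 = 0$ (so $r = 0$, $a = 0$, and $\omega(n) = q$) is immediate.

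The main obstacle is the bullet classification in part (a): one must carefully apply the minimality clause of Definition~\ref{d:omega_numerical} in both directions to force $m = 0$ in the mixed case and to guarantee that each pure bullet is unique in its coordinate direction. Once this classification is established, the remaining steps---the explicit formulas for $b_i$, the length bound on factorizations, and the modular identification in part (b)---reduce to elementary bookkeeping.
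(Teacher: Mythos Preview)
The paper does not supply its own proof of Theorem~\ref{t:2genomega}; it is a survey article, and both parts are simply quoted with citations to \cite[Theorem~4.4]{andalg} and \cite[Theorem~4.4]{quasi}, so there is no in-paper argument to compare your proposal against.

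That said, your plan is sound and is essentially the approach taken in the cited sources. The key observation---that in a two-generated numerical monoid any mixed bullet $(c_1,c_2)$ with $c_1,c_2>0$ must satisfy $c_1n_1+c_2n_2-n=0$, hence is a factorization of $n$---is exactly right, and it reduces the computation of $\omega(n)$ to the two pure bullets $b_1\vec e_1$, $b_2\vec e_2$ together with a bound on maximum factorization length. Your derivation of $b_i$ via the divisibility $n_1\mid a_2+d_2$ and the trade argument showing that the longest factorization has length at most $b_1$ are both correct; note in passing that the expressions $a_1+n_2\lceil a_2/n_1\rceil$ and $a_2+n_1\lceil a_1/n_2\rceil$ are in fact independent of the chosen factorization $(a_1,a_2)$, which you implicitly use but might state explicitly. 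Your handling of part~(b) is also correct: the growth comparison $b_1\sim a_1$ versus $b_2\sim (n_1/n_2)a_1$ forces $\omega(n)=b_1$ for large $n$, and the modular identification of $a^*=(n_2(n_1-a_2)+r)/n_1$ with the minimal $a$ in the statement goes through because $\gcd(n_1,n_2)=1$ guarantees a unique solution to $xn_1\equiv r\pmod{n_2}$ in $[0,n_2)$, and your bounds place $a^*$ in that range. The only cosmetic point is that the phrase ``length at most $a_1+n_2\le b_1$'' is slightly imprecise when $a_2=0$ (there $b_1=a_1$, not $a_1+n_2$), but the conclusion length${}\le b_1$ still holds.
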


While both results produce the same $\omega$-values, they have different advantages.  
The form given in Theorem~\ref{t:2genomega}(a) holds for every element in the monoid 
but requires a separate computation for each $n$.  The form in Theorem~\ref{t:2genomega}(b) 
only holds for sufficiently large $n$, but since there are at most $n_1$ possibilities 
for $a$, it is easier to provide a description of the $\omega$ function that more clearly 
demonstrates its growth.

\begin{example}\label{e:2genomega}
The $\omega$-function for the monoid $\Gamma = \langle 3,7 \rangle$ can be written as a piecewise function with 
$3$ cases, corresponding to the three possible values for $a$.  Writing an element 
$n \in \Gamma$ as $n = 3q + r$ with $0 \leq r < 3$, one can easily compute the minimal 
natural number $a$ such that $3a \equiv r \bmod 7$ for $r = 0,1,2$.  Doing so yields 
$a = 0, 5, 3$, respectively.  Thus, we have that 
$$\omega(n) = \left\{\begin{array}{cc}
q + 0 & \text{ for } r = 0 \\
q+5 & \text{ for } r = 1 \\
q+3 & \text{ for } r = 2
\end{array} \right.,$$
which holds for all $n \geq 7$.
\end{example}

\subsection{The asymptotics of $\omega(n)$}

Theorem ~\ref{t:2genomega}(b) also points out an interesting property about the behavior 
of $\omega$-function on a numerical monoid.  Since $q = \frac{n-r}{n_1}$ and $0 \leq r < n_1$, 
we can re-write each $q+a$ expression as a linear function in the variable $n$ with 
slope $\frac{1}{n_1}$.  Thus, the $\omega$-function on numerical monoids with two irreducible 
elements is eventually a collection of up to $n_1$ discrete linear functions.  This type of 
function, called a quasilinear function, is defined below.  See~\cite{ec} for a full treatment of quasipolynomials.

\begin{defn}\label{d:quasi}
A map $f: \NN \to \NN$ is \emph{quasilinear} if $f(n) = a_1(n) n + a_0(n)$, 
where $a_1, a_2: \NN \to \QQ$ are each periodic and $a_1(n)$ is not identically zero.  
\end{defn}

While the closed form for $\omega(n)$ given in Theorem~\ref{t:2genomega}(b) shows that 
$\omega$ is eventually quasilinear on numerical monoids with two irreducible elements, 
this actually holds for every numerical monoid.  Theorem~\ref{t:quasi} appeared as 
\cite[Theorem~3.6]{quasi} and more generally as~\cite[Corollary~19]{compasymp}.  

\begin{thm}\label{t:quasi}
Let $\Gamma = \langle n_1, n_2, \ldots, n_k \rangle$ be a numerical monoid.  The $\omega$-primality function is eventually quasilinear.  
More specifically, there exists an explicit $N_0 > 0$ (computable in terms of the minimal generators of $\Gamma$) such that 
$\omega(n) = \frac{1}{n_1}n + a_0(n)$ for every $n > N_0$, where $a_0(n)$ is periodic with period dividing $n_1$.  
\end{thm}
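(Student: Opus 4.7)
The plan is to prove that $\omega(n + n_1) = \omega(n) + 1$ for every $n \in \Gamma$ sufficiently large; the quasilinearity and specific form $\omega(n) = n/n_1 + a_0(n)$ then follow by iterating this recursion within each residue class modulo $n_1$.

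The crux is an analysis of how bullets for $n$ and bullets for $n + n_1$ relate under the natural maps $\vec a \mapsto \vec a + \vec e_1$ and $\vec b \mapsto \vec b - \vec e_1$. A direct unpacking of Definition~\ref{d:omega_numerical} shows that if $\vec a \in \bul(n)$ has $a_1 \geq 1$, then $\vec a + \vec e_1 \in \bul(n+n_1)$; conversely, if $\vec b \in \bul(n+n_1)$ has $b_1 \geq 1$, then $\vec b - \vec e_1 \in \bul(n)$. In particular, provided both $n$ and $n + n_1$ admit maximal bullets with positive first coordinate, these constructions immediately yield the two inequalities $\omega(n+n_1) \geq \omega(n) + 1$ and $\omega(n+n_1) \leq \omega(n) + 1$, hence equality.

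The key technical step is therefore to show that for all $n$ large enough, every maximal bullet for $n$ has nonzero first coordinate. This reduces to a size comparison: any bullet $\vec a$ for $n$ with $a_1 = 0$ must satisfy $\sum a_i n_i - n_j - n \leq F(\Gamma)$ for some $j \geq 2$ with $a_j > 0$ (where $F(\Gamma)$ denotes the Frobenius number of $\Gamma$), so $\sum a_i n_i \leq n + n_k + F(\Gamma)$ and consequently $|\vec a| \leq (n + O(1))/n_2$. On the other hand, Lemma~\ref{l:bounded} produces the bullet $b_1 \vec e_1$ for $n$ of length $b_1 \geq \lceil n/n_1 \rceil$, so $\omega(n) \geq n/n_1$. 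Because $n_1 < n_2$, the bound $n/n_1$ eventually strictly exceeds $(n + O(1))/n_2$, forcing any maximal bullet to have $a_1 > 0$ whenever $n$ exceeds an explicit threshold $N_0$ depending only on $n_1$, $n_2$, and $F(\Gamma)$.

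Combining this with the bullet-transfer argument gives $\omega(n + n_1) = \omega(n) + 1$ for all $n \in \Gamma$ with $n > N_0$. Fixing a representative $n_r \in \Gamma$ of each residue class $r$ modulo $n_1$ with $n_r > N_0$, induction along the class yields $\omega(n) = n/n_1 + (\omega(n_r) - n_r/n_1)$ whenever $n \equiv r \pmod{n_1}$ and $n \geq n_r$, so $a_0(n) := \omega(n) - n/n_1$ depends only on $n \bmod n_1$ and is periodic with period dividing $n_1$. The main obstacle is the size-comparison step---the inequality itself is elementary, but extracting the explicit value of $N_0$ promised in the theorem requires careful tracking of Ap\'ery-set data for $n_1$ together with the Frobenius number.
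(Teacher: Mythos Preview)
The paper itself does not prove Theorem~\ref{t:quasi}; it only cites \cite[Theorem~3.6]{quasi} and \cite[Corollary~19]{compasymp}, with the sole hint (in Section~\ref{s:newalgs}) that the original proof hinges on \emph{cover maps} relating the bullet set of an element to those of its divisors.

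Your argument is correct and is, in fact, precisely the cover-map strategy the paper alludes to: the maps $\vec a \mapsto \vec a \pm \vec e_1$ between $\bul(n)$ and $\bul(n+n_1)$ are exactly the cover maps of \cite[Definition~3.4]{quasi}, and your verification that they preserve the bullet property when the first coordinate is positive is the content of that definition. The size comparison---bullets with $a_1 = 0$ have length at most $(n + n_k + F(\Gamma))/n_2$, while the pure bullet $b_1\vec e_1$ already has length at least $n/n_1$---is the same mechanism that drives the original proof, and your explicit threshold $N_0 = n_1(n_k + F(\Gamma))/(n_2 - n_1)$ matches what one obtains there. So your proposal recovers the argument of \cite{quasi} essentially verbatim; there is nothing to correct.
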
 

Theorem ~\ref{t:quasi} becomes most striking when plotting the function $\omega: \Gamma \to \NN$.  
The eventual quasilinearity manifests graphically as a collection of discrete lines with identical 
slopes of $\frac{1}{n_1}$.  The graphs of the $\omega$-function for the McNugget monoid 
$M = \langle 6, 9, 20 \rangle$ and the $2$-generator monoid $\langle 3,7 \rangle$ are given in Figure 1.  

\begin{figure}\label{f:graph}
\begin{center}
\includegraphics[width=2.5in]{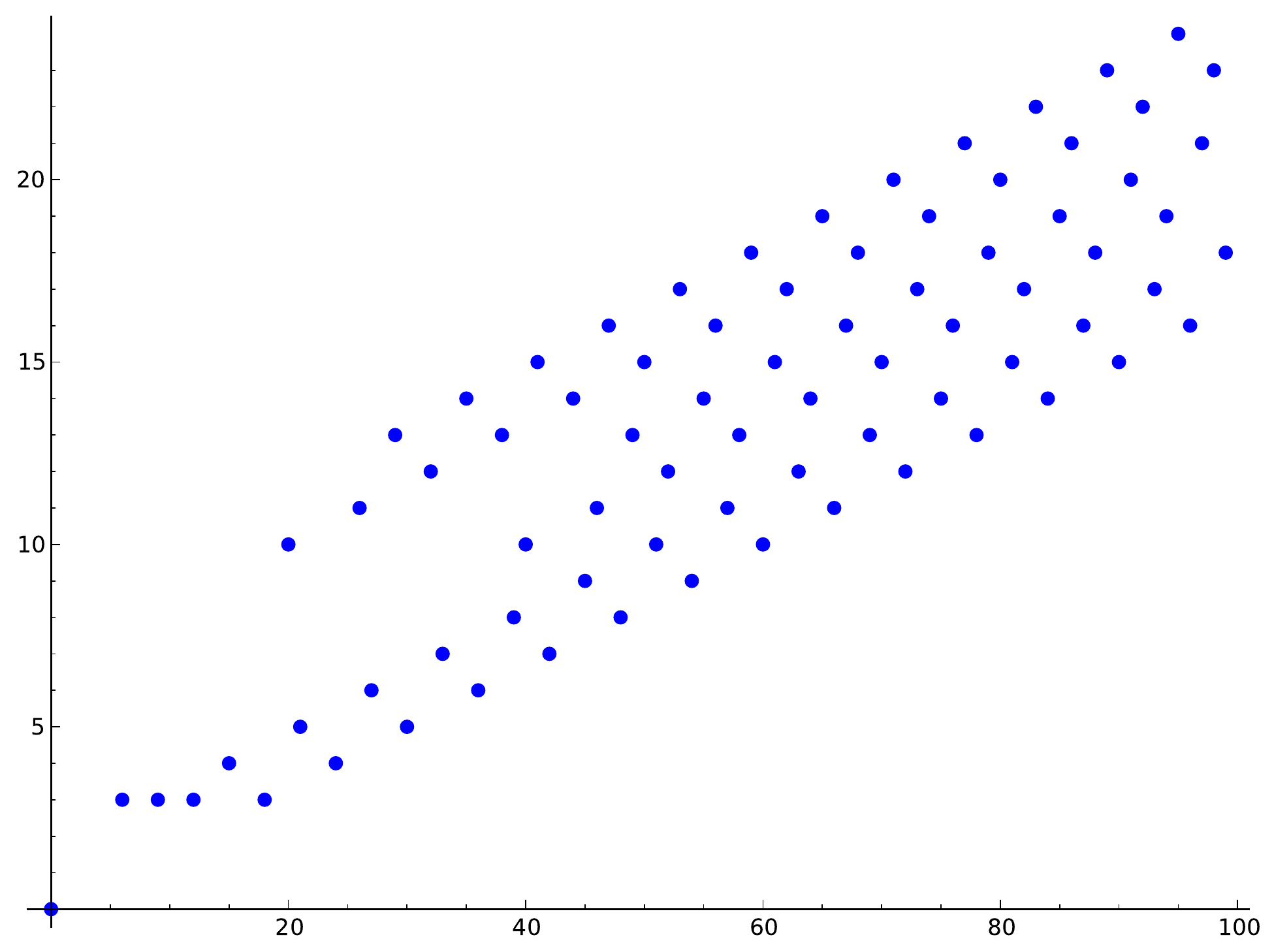}
\hspace{0.5in}
\includegraphics[width=2.5in]{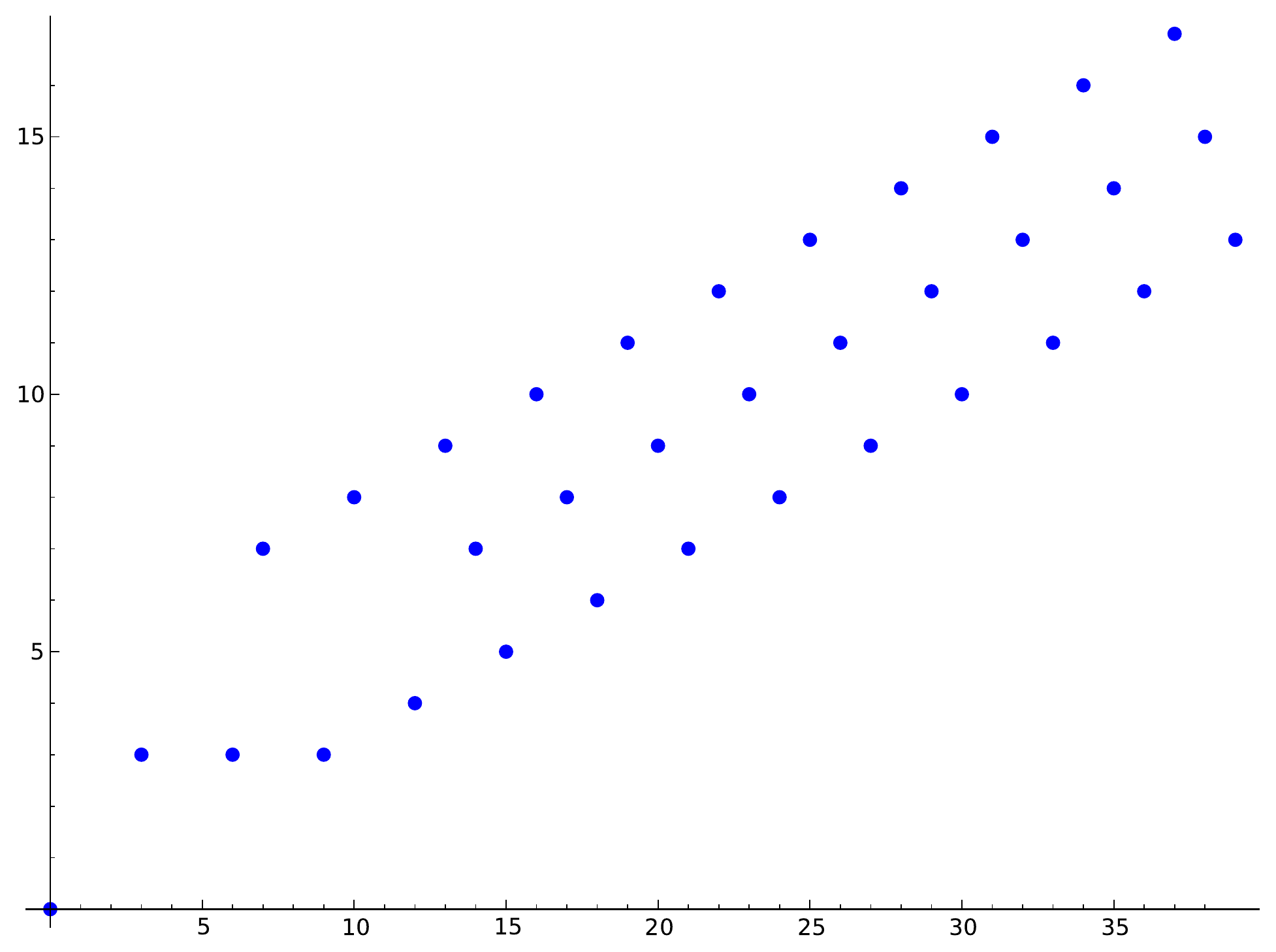}
\caption{The graphs of $\omega(n)$ for the McNugget Monoid $\langle6,9,20 \rangle$ and $\langle3,7 \rangle$.}
\end{center}
\end{figure}

This eventual quasilinearity also can be used to describe the asymptotic growth of $\omega$-values in numerical monoids.  
The following result appears for $2$ generators in ~\cite{andalg} and in full generality in~\cite{compasymp,quasi}.

\begin{cor}[{\cite[Corollary~3.9]{quasi}}]\label{c:asymptotics}
For any numerical monoid $\Gamma = \langle n_1, n_2, \ldots, n_k \rangle$ with $n_1$ the smallest irreducible element, we have that 
$$\lim_{n \to \infty} \frac{\omega(n)}{n} = \frac{1}{n_1}.$$
\end{cor}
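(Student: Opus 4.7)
The plan is to deduce this corollary directly from Theorem~\ref{t:quasi}, which provides the eventual quasilinear structure of $\omega$ on $\Gamma$. Theorem~\ref{t:quasi} gives an explicit $N_0 > 0$ such that, for all $n \in \Gamma$ with $n > N_0$,
\[
\omega(n) = \frac{1}{n_1} n + a_0(n),
\]
where $a_0 : \NN \to \QQ$ is periodic with period dividing $n_1$. This identity is essentially all that is needed; the task is just to extract the asymptotic slope.

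First, I would observe that since $a_0$ is periodic with period dividing $n_1$, it takes at most $n_1$ distinct values on $\NN$, and so it is bounded: there exists a constant $C \ge 0$ (for instance, the maximum of $|a_0(0)|, |a_0(1)|, \dots, |a_0(n_1-1)|$) with $|a_0(n)| \le C$ for all $n$. Then dividing the quasilinear identity by $n$ for $n > N_0$ yields
\[
\frac{\omega(n)}{n} = \frac{1}{n_1} + \frac{a_0(n)}{n},
\]
and the bound $\left|\frac{a_0(n)}{n}\right| \le \frac{C}{n}$ shows that the second term vanishes as $n \to \infty$. Taking the limit along $n \in \Gamma$ (which is cofinite in $\NN$ by Remark~\ref{r:generating_set}, so unboundedly many $n$ lie in $\Gamma$) gives the claimed value $\frac{1}{n_1}$.

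There is no real obstacle here, since all the heavy lifting is done by Theorem~\ref{t:quasi}: the periodicity of $a_0$ automatically forces boundedness, and boundedness of a correction term against a linear main term is exactly what the asymptotic statement requires. The only small point worth being careful about is that $\omega$ is a priori defined only on $\Gamma$ rather than on all of $\NN$, but this is harmless because $\Gamma$ is cofinite in $\NN$ (so the limit along $\Gamma$ and the limit along the cofinite subset agree in the natural sense), and because the formula from Theorem~\ref{t:quasi} applies uniformly on $\Gamma \cap (N_0, \infty)$. Thus the corollary follows in a few lines from the quasilinearity theorem.
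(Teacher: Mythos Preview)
Your proposal is correct and follows exactly the approach the paper intends: the corollary is presented immediately after Theorem~\ref{t:quasi} with the remark that ``this eventual quasilinearity also can be used to describe the asymptotic growth of $\omega$-values,'' and the paper gives no further proof beyond citing \cite[Corollary~3.9]{quasi}. Your argument---bounding the periodic term $a_0(n)$ and dividing through by $n$---is precisely the intended one-line derivation from the quasilinear form.
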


\section{Open problems}\label{s:open}

We now give some open problems that have recently arisen in the study of $\omega$-primality.  As previously stated, all problems stated here should be accessible to an advanced undergraduate or a graduate student.  

\subsection{$\omega$-values of generators in numerical monoids}
\label{s:newnumerical}

One natural place to study $\omega$-values in numerical monoids is to compute these values for the irreducible elements 
themselves (i.e, the generators of the monoid).  In numerical monoids $\langle n_1, n_2 \rangle$ with two irreducible 
elements $n_1$ and $n_2$, ~\cite{andalg} shows that the $\omega$-value of a generator is equal to the generator itself.  
In fact, they give the following biconditional statement as a consequence of Theorem~\ref{t:2genomega}(a).  

\begin{cor}\cite[Theorem 4.5]{andalg}\label{c:gen_2gen}
For relatively prime $n_1, n_2 \in \NN$ greater than $1$, a non-identity element $x \in \langle n_1, n_2 \rangle$ 
has $\omega(x) = x$ if and only if $x$ is either $n_1$ or $n_2$.  
\end{cor}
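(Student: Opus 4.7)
The plan is to derive this corollary directly from the closed form in Theorem~\ref{t:2genomega}(a); both directions of the biconditional then reduce to short arithmetic once the formula is applied to a suitable factorization of $x$.

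For the reverse direction, I would apply Theorem~\ref{t:2genomega}(a) to the standard factorization $\vec e_i$ of $n_i$. For $x = n_1$, using $\vec e_1 = (1,0)$ together with the hypothesis $n_2 \geq 2$, the formula evaluates to
\[ \omega(n_1) \;=\; \max\!\left\{\lceil 0/n_1 \rceil\, n_2 + 1,\; \lceil 1/n_2 \rceil\, n_1 + 0\right\} \;=\; \max\{1, n_1\} \;=\; n_1, \]
and the case $x = n_2$ is symmetric.

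For the forward direction, suppose $\omega(x) = x$ and pick any factorization $(a_1, a_2) \in \NN^2$ of $x$, so that $x = a_1 n_1 + a_2 n_2$. Theorem~\ref{t:2genomega}(a) then reads
\[ x \;=\; \max\!\left\{\left\lceil \tfrac{a_2}{n_1}\right\rceil n_2 + a_1,\; \left\lceil \tfrac{a_1}{n_2}\right\rceil n_1 + a_2\right\}, \]
and at least one of the two maximands must equal $x$. By the symmetric role of $n_1$ and $n_2$, I focus on the first case: rearranging $\lceil a_2/n_1\rceil n_2 + a_1 = a_1 n_1 + a_2 n_2$ yields
\[ \left(\left\lceil \tfrac{a_2}{n_1}\right\rceil - a_2\right) n_2 \;=\; a_1 (n_1 - 1). \]
Since $n_1, n_2 \geq 2$, the right side is non-negative while the left side is non-positive (strictly so when $a_2 \geq 2$, since then $\lceil a_2/n_1\rceil < a_2$). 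Hence both sides vanish, forcing $a_1 = 0$ and $a_2 \in \{0,1\}$; the non-identity hypothesis excludes $(0,0)$, leaving $(a_1,a_2) = (0,1)$ and $x = n_2$. The parallel treatment of the other maximand yields $x = n_1$.

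I do not anticipate any serious obstacle: the argument is essentially mechanical once Theorem~\ref{t:2genomega}(a) is in hand. The only points requiring care are the elementary inequality $\lceil a_2/n_1\rceil \leq a_2$ (strict for $a_2 \geq 2$, since $n_1 \geq 2$) used to confine $a_2$ to $\{0,1\}$, and the observation that Theorem~\ref{t:2genomega}(a) returns the same value of $\omega(x)$ for every factorization of $x$, so the choice of $(a_1, a_2)$ in the forward direction is immaterial.
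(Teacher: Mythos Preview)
Your proposal is correct and follows exactly the route the paper indicates: the corollary is stated (without proof) as a consequence of Theorem~\ref{t:2genomega}(a), and your argument carries out that derivation cleanly. The only thing to note is that the paper does not actually supply the details you provide, so there is nothing further to compare.
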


Corollary~\ref{c:gen_2gen} shows that in a numerical monoid with $2$ generators $n_1 < n_2$, the omega values of the 
irreducible elements have the ordering $\omega(n_1) < \omega(n_2)$.  For numerical monoids with $3$ minimal generators 
$n_1 < n_2 < n_3$, the ordering of their $\omega$-values is much more difficult to predict.  For example, the numerical 
monoid $\langle  6,8,13 \rangle$ has $\omega(6) < \omega (8) < \omega(13)$, while $\langle 7, 8, 12 \rangle$ has 
$\omega(7) > \omega(8) = \omega(12)$.  In fact, computation has shown that almost every possible ordering of the set 
$\{\omega(n_1), \omega(n_2), \omega(n_3)\}$ is attained for some generators $n_1, n_2, n_3 \le 100$.  Only three 
orderings fail to appear, motivating the following conjecture, which appears in~\cite{interval}.

\begin{conj}
Fix a numerical monoid $\Gamma = \<n_1, n_2, n_3\>$ minimally generated by $n_1 < n_2 < n_3$.  
The following orderings of $\omega(n_1)$, $\omega(n_2)$ and $\omega(n_3)$ are not possible: 
\begin{enumerate}[(a)]
\item $\omega(n_1) > \omega(n_2) > \omega(n_3)$,
\item $\omega(n_1) = \omega(n_2) > \omega(n_3)$,
\item $\omega(n_3) < \omega(n_1) < \omega(n_2)$.
\end{enumerate}
\end{conj}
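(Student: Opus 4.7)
The first step is to reformulate. A direct check shows that the three forbidden orderings are precisely the strict orderings in which $\omega(n_3) < \omega(n_1)$ and $\omega(n_3) < \omega(n_2)$ both hold, so the conjecture is equivalent to the single inequality
\[
\omega(n_3) \ge \min(\omega(n_1),\omega(n_2)).
\]
I plan to argue by contradiction: assume $\omega(n_3) < \omega(n_1)$ and $\omega(n_3) < \omega(n_2)$, and aim to build a bullet for $n_3$ of length strictly greater than $\omega(n_3)$.

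The main construction is an exchange argument. Fix a maximal bullet $\vec a = (a_1,a_2,a_3)$ for $n_1$ of length $\omega(n_1)$, and, provided $a_1 \ge 1$, form $\vec a' = (a_1-1,\,a_2,\,a_3+1)$, which has the same length as $\vec a$. A short calculation gives $\sum_j a_j' n_j - n_3 = \sum_j a_j n_j - n_1 \in \Gamma$, so the first bullet condition for $\vec a'$ with respect to $n_3$ holds. For the ``no $n_j$ can be dropped'' conditions: when $a_1 \ge 2$ the condition for $j=1$ reduces to $\sum a_j n_j - 2n_1 \notin \Gamma$, which follows from $\vec a$ being a bullet for $n_1$ (the $j=1$ case); and for $j=2$ with $a_2>0$ the condition reduces identically to the $j=2$ case for $\vec a$. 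Thus $\vec a'$ becomes a bullet for $n_3$ of length $\omega(n_1) > \omega(n_3)$, yielding the desired contradiction, as soon as one also knows $\sum a_j n_j - n_1 - n_3 \notin \Gamma$, and this is automatic whenever $a_3 \ge 1$.

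The main obstacle is the residual situation where the construction does not go through directly: either $a_1 = 0$ (the exchange is undefined), or $a_1 \ge 1$ but $a_3 = 0$ (the condition on subtracting $n_3$ is not supplied by the hypothesis that $\vec a$ is a bullet). When $a_1 = 0$, I would attempt the symmetric exchange $(b_1,b_2,b_3) \mapsto (b_1,\,b_2-1,\,b_3+1)$ applied to a maximal bullet for $n_2$, producing a bullet for $n_3$ of length $\omega(n_2)$ and again contradicting $\omega(n_3) < \omega(n_2)$. The genuinely hard subcase is when every maximal bullet for both $n_1$ and $n_2$ avoids $n_3$ entirely. Here I expect the proof must exploit the ordering $n_1 < n_2 < n_3$ through the Apéry sets $\Ap(\Gamma,n_j)$: a bullet for $n_i$ with support $S$ corresponds to an element of $\bigcap_{j \in S} \Ap(\Gamma,n_j)$ of prescribed factorization length, and the systematic avoidance of $n_3$ in all extremal bullets for $n_1$ and $n_2$ should impose strong constraints on how these three Apéry sets interact. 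Showing that these constraints force the existence of a long bullet for $n_3$---thereby contradicting the hypothesis $\omega(n_3) < \min(\omega(n_1),\omega(n_2))$---is the step I expect to be the real difficulty, and it is likely the reason the statement remains conjectural despite substantial computational evidence.
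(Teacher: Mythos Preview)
The paper does not prove this statement: it is stated as a \emph{conjecture} (in Section~\ref{s:newnumerical}), motivated by the computational observation that these three orderings fail to appear among minimally $3$-generated numerical monoids with generators at most $100$.  There is therefore no proof in the paper against which to compare your attempt.

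Your reformulation is correct and useful: the three forbidden orderings are exactly the cases in which $\omega(n_3)$ is strictly smaller than both $\omega(n_1)$ and $\omega(n_2)$, so the conjecture is equivalent to $\omega(n_3) \ge \min(\omega(n_1),\omega(n_2))$.  Your exchange argument is also sound where it applies: if some maximal bullet $\vec a$ for $n_1$ has $a_1 \ge 1$ and $a_3 \ge 1$, then $(a_1-1,a_2,a_3+1)$ is indeed a bullet for $n_3$ of length $\omega(n_1)$, and symmetrically for $n_2$.

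But you have not proved the conjecture, and you say so yourself.  The residual case---where no maximal bullet for $n_1$ has $a_1,a_3$ both positive and no maximal bullet for $n_2$ has $b_2,b_3$ both positive---is not handled.  Your description of that case as ``every maximal bullet for both $n_1$ and $n_2$ avoids $n_3$ entirely'' is a slight oversimplification (the obstruction also includes, e.g., $a_1=0$ with $a_3>0$), and the Ap\'ery-set heuristic you sketch is not yet an argument.  Since the paper records this as an open problem, what you have is a clean reduction plus a partial resolution that isolates the genuine difficulty, not a proof.
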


Some progress has also been made for certain classes of numerical monoids.  When $\Gamma = \<n, n+1, n+2, \ldots, n + k\>$ is generated 
by an interval of positive integers, an element $x$ lies in $\Gamma$ if and only if $x \bmod n \le \lfloor \frac{x}{n} \rfloor k$.  
The authors of \cite{interval} used this membership criterion to give a formula for the $\omega$-values of the generators of $\Gamma$ when 
$k = 2$.  Their result, stated here as Theorem~\ref{t:3geninterval}, motivates Problem~\ref{p:interval}.  

\begin{thm}[{\cite[Theorem~1.2]{interval}}]\label{t:3geninterval}
Fix an integer $n \ge 2$.  
For $\Gamma = \<2n - 1, 2n, 2n + 1\>$, we have 
$$\omega(2n - 1) = n \text{ and } \omega(2n) = \omega(2n + 1) = n + 1,$$
and for $\Gamma = \<2n, 2n + 1, 2n + 2\>$, we have 
$$\omega(2n) = n, \omega(2n+1) = n + 2, \text{ and } \omega(2n+2) = n + 1.$$
\end{thm}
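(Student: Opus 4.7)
The plan is to prove each of the six equalities by a two-sided bound: exhibit a bullet of the claimed length, and rule out any longer bullet. By Proposition~\ref{p:bullet}, $\omega(x)$ equals the maximum length of a bullet for $x$, so these two tasks together suffice. Throughout, I would use the membership criterion for an interval monoid $\langle n_1, n_1{+}1, n_1{+}2\rangle$: a nonnegative integer $m$ lies in the monoid if and only if $qn_1 \le m \le q(n_1{+}2)$ for some nonnegative integer $q$.

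For the lower bound, I would compute, for each $i \in \{1,2,3\}$ and each target generator $x$, the minimal positive integer $b_i$ with $b_i n_i - x \in \Gamma$. Lemma~\ref{l:bounded} then furnishes the single-generator bullet $b_i \vec e_i$ of length $b_i$ for $x$. A routine calculation using the membership criterion yields the triples $(b_1, b_2, b_3)$ listed below, where the three entries in each row correspond to the three generators of $\Gamma$ taken as $x$ in increasing order:
\begin{align*}
\Gamma = \langle 2n-1, 2n, 2n+1 \rangle: \quad & (1,2,n),\; (n+1,1,n),\; (n+1,2,1), \\
\Gamma = \langle 2n, 2n+1, 2n+2 \rangle: \quad & (1,2,n),\; (n+2,1,n+1),\; (n+1,2,1).
\end{align*}
In every case the maximum entry of the triple coincides with the $\omega$-value claimed in the theorem, giving the lower bound.

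For the upper bound, suppose $\vec a = (a_1, a_2, a_3)$ is a bullet for $x$ of length $L$, and set $y = \sum a_i n_i - x \in \Gamma$. Writing $\sum a_i n_i = ML + \delta$, where $M$ is the middle generator of $\Gamma$ and $\delta = a_3 - a_1 \in [-L, L]$, we have $y - n_i = M(L-2) + \delta + c_i$ for three consecutive integer constants $c_1 > c_2 > c_3$ depending only on $x$. The condition $y - n_i \in \Gamma$ unpacks via the membership criterion into a union of intervals in $\delta$, with the dominant contributions coming from the test values $q \in \{L-2, L-1, L-3\}$, giving respectively $\delta + c_i \in [-(L-2), L-2]$, $\delta + c_i \ge M - (L-1)$, and $\delta + c_i \le -(M - L + 3)$. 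Because the $c_i$ are consecutive integers, the three sets $\{\delta : y - n_i \in \Gamma\}$ are unit translates of one another. Setting $L$ equal to one more than the claimed $\omega(x)$, a direct check shows these three sets jointly cover $[-L, L]$ except possibly at one or two extremal $\delta$-values; each such extremal $\delta$-value forces $\vec a$ to be supported on a single index $i_0$ with $a_{i_0} = L$, so $L \le b_{i_0}$ by Lemma~\ref{l:bounded} contradicts $L > \max_j b_j$.

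The main obstacle is organizing the interval bookkeeping uniformly across the six $(x, \Gamma)$ pairs. The unit-translate observation collapses the three families into essentially one computation per $(x, \Gamma)$, but one still has to identify the extremal $\delta$-values carefully and check, in each such case, that the induced $\vec a$ either fails $a_j \ge 0$ or violates Lemma~\ref{l:bounded}. A separate sanity check for small $n$ (notably $n = 2$) is prudent because the intervals from $q = L-3$ can degenerate to empty there; however, the overall pattern persists and the conclusion holds in every case.
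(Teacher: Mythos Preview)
The paper does not prove this theorem: it is quoted from \cite{interval} with a citation and immediately followed by ``This leads to the following open problem.'' So there is no in-paper proof to compare against. I will comment on your argument on its own merits.

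Your lower bound is correct. The six triples $(b_1,b_2,b_3)$ check out, and in every case the maximum entry matches the claimed $\omega$-value, so Lemma~\ref{l:bounded} delivers the required lower bound.

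The upper bound sketch has two genuine gaps. First, you only treat $L$ equal to the claimed value plus one. Ruling out bullets of that exact length does not by itself rule out longer bullets; you need either to show the same interval analysis works for every larger $L$, or to invoke Lemma~\ref{l:bounded}(b) to bound $L\le b_1+b_2+b_3$ and then handle the finitely many remaining lengths. Second, and more seriously, the covering argument is not sufficient as stated. Showing that the three sets $\{\delta: y-n_i\in\Gamma\}$ cover $[-L,L]$ only tells you that for each $\delta$ \emph{some} $n_i$ can be removed; to kill the bullet you need $a_i>0$ for that $i$. Your parametrization by $(L,\delta)$ does not determine $\vec a$: for instance $\delta=0$ is achieved both by $(1,L-2,1)$ and by $(0,L,0)$. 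If a given $\delta$ is covered only by the $i=1$ set and $\vec a$ happens to have $a_1=0$, your argument does not conclude. You handle the extremal $\delta$-values (which force single support) via Lemma~\ref{l:bounded}, but you have not shown that non-extremal $\delta$ are always covered by at least one index $i$ with $a_i>0$. This can in fact be repaired---Lemma~\ref{l:bounded}(b) forces $a_2\le 2$ in every case, which sharply constrains the possible $\vec a$ for each $\delta$---but the repair needs to be written out, not asserted.
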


This leads to the following open problem.

\begin{prob}\label{p:interval}
Fix a numerical monoid $\Gamma = \<n, n + 1, \ldots, n + k\>$.  
Find a formula for the $\omega$-values of the generators of $\Gamma$.  
\end{prob}

A similar membership criterion exists for numerical monoids $\Gamma = \<n, n+s, \ldots, n+ks\>$ generated by an arithmetic sequence 
of length $k$ with step size $s$, and this criterion specializes to the criterion given above for monoids generated by an interval 
when $s = 1$ (see \cite{omidali} and~\cite{omidalirahmati} for more details).  Thus, Problem~\ref{p:interval} is a special case of 
Problem~\ref{p:arithseq}.  

\begin{prob}\label{p:arithseq}
Fix positive integers $n$, $s$ and $k$, and let $\Gamma$ denote the numerical semigroup minimally generated by $\{n, n+s, \ldots, n+ks\}$.  
Find a formula for the $\omega$-values of the generators of $\Gamma$.  
\end{prob}

Problems~\ref{p:interval} and~\ref{p:arithseq} are potentially solvable because the membership criteria for these classes of numerical monoids facilitate the computation of bullets for the generators.  While it remains a lofty goal to ask for a closed form for the $\omega$-function for general numerical monoids, the membership criteria may be helpful enough to make a closed form for those generated by intervals or arithmetic sequences reasonable.   Furthermore, since every pair of natural numbers forms an arithmetic sequence, the closed form for $\omega(n)$ for $\<n_1, n_2\>$ described in Theorem~\ref{t:2genomega}(a) may lend motivation to this end.  Thus, we mention this extended version of Problem~\ref{p:arithseq}.

\begin{prob}\label{p:closed form}
Find a closed form for the $\omega$-function on a numerical monoid generated by an interval or by an arithmetic sequence.
\end{prob}

\subsection{Eventual quasilinearity}
\label{s:newquasi}

Theorem~\ref{t:quasi} guarantees that the $\omega$-function eventually has the form $\omega(n) = \frac{1}{n_1}n + a_0(n)$ 
for a periodic function $a_0(n)$.  The (minimal) period of $a_0(n)$ is only guaranteed to divide $n_1$, but in all 
existing computational data, the period is exactly $n_1$.  This motivates the following.  

\begin{conj}\label{c:period}
Fix a numerical monoid $\Gamma$ with least generator $n_1$.  
For $n$ sufficiently large, $\omega(n)$ is quasilinear with period exactly $n_1$.  
\end{conj}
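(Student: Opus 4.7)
The plan is to deduce Conjecture~\ref{c:period} as an essentially immediate consequence of Theorem~\ref{t:quasi} combined with the fact that $\omega$ takes values in $\NN$. By Theorem~\ref{t:quasi}, there exists $N_0$ such that for every $n > N_0$,
\[
\omega(n) = \frac{n}{n_1} + a_0(n),
\]
where $a_0$ is periodic with some period dividing $n_1$. Let $d$ denote the minimal period of $a_0$. Since the minimal period divides every period, we have $d \mid n_1$. The goal is to show that the only possibility is $d = n_1$.

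The key step is to evaluate $\omega(n+d) - \omega(n)$ for $n > N_0$ using the displayed formula. Because $a_0(n+d) = a_0(n)$, this difference collapses to
\[
\omega(n+d) - \omega(n) = \frac{d}{n_1}.
\]
On the other hand, $\omega(n+d) - \omega(n)$ is the difference of two elements of $\NN$, hence an integer. Therefore $d/n_1 \in \ZZ$, i.e., $n_1 \mid d$. Combined with $d \mid n_1$, this forces $d = n_1$, which is exactly the conjecture's claim.

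The ``hard part'' is not computational but conceptual: one has to notice that the fractional slope $1/n_1$ interacts with the integer-valuedness of $\omega$ to constrain the period. Equivalently, the identity $a_0(n) \equiv -n/n_1 \pmod{1}$ valid for $n > N_0$ shows that $a_0$ takes $n_1$ pairwise incongruent (mod $1$) values as $n$ ranges over a complete residue system modulo $n_1$, so no proper divisor of $n_1$ can serve as a period. Once this observation is in hand, the conjecture follows in a line, and no hypothesis beyond those of Theorem~\ref{t:quasi} is needed.
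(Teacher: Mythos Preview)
The paper does not prove this statement at all: it is stated as Conjecture~\ref{c:period} in Section~\ref{s:newquasi}, with the authors noting only that ``in all existing computational data, the period is exactly $n_1$.'' So there is no proof in the paper to compare against.

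Your argument, however, is correct and in fact settles the conjecture. Once Theorem~\ref{t:quasi} fixes the representation $\omega(n) = n/n_1 + a_0(n)$ for $n > N_0$, the function $a_0(n) = \omega(n) - n/n_1$ is uniquely determined on that range, and hence (by periodicity) everywhere. If $d$ is its minimal period, then for any $n$ large enough that $n$ and $n+d$ both exceed $N_0$ and lie in $\Gamma$, your computation
\[
\omega(n+d)-\omega(n)=\frac{d}{n_1}
\]
is valid, and integer-valuedness of $\omega$ forces $n_1 \mid d$. Together with $d \mid n_1$ from Theorem~\ref{t:quasi}, this gives $d = n_1$. The authors appear simply to have overlooked that the rational slope $1/n_1$ already rules out any proper divisor of $n_1$ as a period once $\omega$ is known to take values in~$\NN$.
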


The smallest value of $N_0$ in Theorem~\ref{t:quasi} is known as the \emph{dissonance point} of the $\omega$-function.  
The value of $N_0$ in the proof of Theorem~\ref{t:quasi} in \cite{quasi}, while constructive, is far from optimal.  
A more precise bound on the dissonance point would make the quasilinear formula for large $\omega$-values more computationally practical.  

\begin{prob}\label{p:dissonance}
Fix a numerical monoid $\Gamma$, and let $f$ denote the quasilinear function such that 
$f(n) = \omega(n)$ for $n \gg 0$.  Characterize the maximal $n$ such that $\omega(n) \ne f(n)$.  
\end{prob}

Example~\ref{e:3-7_bullets} shows that maximal bullets are not necessarily unique in numerical monoids.  
Understanding which bullets of an element are maximal would provide tremendous insight into more easily 
computing $\omega$-values in numerical monoids.  In~\cite{quasi}, the authors prove that when a numerical monoid has two irreducible elements, the maximal bullets of its elements are of the form $(a_1,0)$ or $(0,a_2)$, In general, though, not all maximal bullets need be supported in only one coordinate.  As Example~\ref{e:11-13-15_bullets} shows, this does not hold true with three or more irreducible elements, motivating the following problem.

\begin{prob}\label{p:maxbullets}
Characterize the maximal bullets for numerical monoids $\Gamma$ with
more than two irreducible elements.  
\end{prob}

\subsection{Algorithms}
\label{s:newalgs}

\excise{

Algorithm~\ref{a:numerical} was essential in discovering many of the results given in Section~\ref{s:numerical}, as it allows for the generation of $\omega$-values sufficient for making and testing conjectures in numerical monoids.  Progress for other classes of monoids would significantly benefit from a more generally applicable algorithm.  One reasonable class of monoids to focus on is finitely-generated monoids.  In~\cite{tame}, Geroldinger and Hassler prove that a certain category of monoids (which includes finitely-generated monoids) attains only finite $\omega$-values.  Furthermore, the finite number of generators would make the encoding of factorizations and bullets comparable to numerical monoids.

\begin{prob}\label{p:generalalgorithm}
Find an algorithm to compute $\omega$-primality for finitely-generated monoids.  
\end{prob}

}

The bullet set of an element depends heavily on the bullet sets of its divisors.  
This observation first appeared in the context of \emph{cover maps} (\cite[Definition~3.4]{quasi}) and was 
crucial in the proof of Theorem~\ref{t:quasi}.  Finding a way to construct the bullet set of an element from 
the bullet sets of its divisors would allow for a ``dynamic programming'' algorithm, which first computes the 
bullet sets for the irreducibles and then walks up the divisibility tree, storing new bullet sets as it goes.  

A dynamic programming algorithm for computing bullet sets would be useful even for numerical monoids.  
It would likely be much faster than the current algorithm by effectively computing many $\omega$-values 
at once, rather than one at a time.  Such an algorithm would be ideal for investigating asymptotic problems 
discussed in Section~\ref{s:newquasi}.  

\begin{prob}\label{p:dynamicalgorithm}
Find an algorithm for computing the bullet set of a monoid element using as input the bullet sets of its divisors.  
\end{prob}

\subsection{Other classes of monoids}
\label{s:newmonoids}

With all of the recent progress on $\omega$-primality in numerical monoids, it is natural to ask 
what can be shown about the $\omega$-function for other classes of monoids.  We now introduce two classes of monoids on which many other factorization invariants have been studied, but little is known about 
the $\omega$-function.  

\begin{defn}\label{d:acm}
Fix positive integers $a, b \in \NN^*$ with $a \leq b$, and suppose $a^2 \equiv a \bmod b$.  
The multiplicative submonoid $M_{a,b} = \{n \in \NN^* : n \equiv a \bmod b\}$ 
of $\NN^*$ is called an \emph{arithmetical congruence monoid}.  
\end{defn}

Notice that the condition $a^2 \equiv a \bmod b$ in Definition~\ref{d:acm} ensures that $M_{a,b}$ 
is closed under multiplication.  The Hilbert monoid in Example~\ref{e:hilbert} is the arithmetical 
congruence monoid $M_{1,4}$.  The factorization theory of arithmetical congruence monoids, as we saw 
for the Hilbert monoid, is heavily determined by prime factorization in $\NN$.  Many factorization invariants 
of arithmetical congruence monoids have been studied (see ~\cite{delta_acm, elas_local, acm_arithmetic, acm_elas}), raising the following open problem.  

\begin{prob}
Study $\omega$-primality on arithmetical congruence monoids.  
\end{prob}

In Section~\ref{s:block}, block monoids $\mathcal B(G)$ over Abelian groups $G$ were introduced, and explicit formulae for the $\omega$-values of their irreducible elements were given.  Computations in Remark~\ref{r:block}, though, show that $\omega$-values of reducible elements in $\mathcal B(G)$ are less predictable, motivating the following problem.

\begin{prob}\label{p:blockmonoid}
Study $\omega$-primality on block monoids.  
\end{prob}

%
%
%

Lastly, we will introduce a relatively new monoid of interest.  Leamer monoids arose while studying the Huneke-Wiegand conjecture, an open problem in commutative algebra related to torsion submodules (see~\cite{HWintersection} and~\cite{leamer} for details).  

Given a numerical monoid $\Gamma$ and a positive integer $s \not \in \Gamma$, the elements of a Leamer monoid are finite arithmetic sequences of step size $s$ that are contained in $\Gamma$.  If the arithmetic sequences $\{n, n+s, \ldots, n+ks\}$ and $\{m, m+s, \ldots, m + \ell s\}$ are contained in $\Gamma$, then so is their set-wise sum $\{n+m, n+m+s, \ldots, n+m + (k + \ell)s\}$; therefore, this operation is closed.  Notice that if we represent the arithmetic sequence $\{n, n+s, \ldots, n+ks\}$ by $(n,k) \in \NN^2$, then the above set-wise sum corresponds to usual component-wise addition in $\NN^2$.  This motivates the following definition.

\begin{defn}\label{d:leamermonoid}
Fix a numerical monoid $\Gamma \subset \NN$ and a positive integer $s \in \NN \setminus \Gamma$ outside of $\Gamma$.  
The \emph{Leamer monoid} is the submonoid 
$$S_\Gamma^s = \{(n,k) \in \NN^2 : n, n + s, \ldots, n + ks \in \Gamma\} \subset \NN^2$$
of $\NN^2$ under component-wise addition.  
\end{defn}

Leamer monoids have been shown to have a factorization theory that is much more complicated than the numerical monoids from which they are derived~\cite{teambob}.  Furthermore, as they are additive submonoids of $\NN^2$, Leamer monoids can be represented graphically.  Figure~\ref{f:leamer} shows the Leamer monoid $S_\Gamma^4$ when $\Gamma = \<13,17,22,40\>$, where the irreducible elements are indicated by larger, darker dots.  As in this example, every Leamer monoid has infinitely many irreducible elements.  However, all irreducible elements are either of the form $(k,1)$ or $(n,m)$, where $n$ is bounded above.  Since these irreducible elements are rather constrained, studying the factorization theory of Leamer monoids becomes more tractable.  

\begin{figure}
\begin{center}
\includegraphics[width=4.5in]{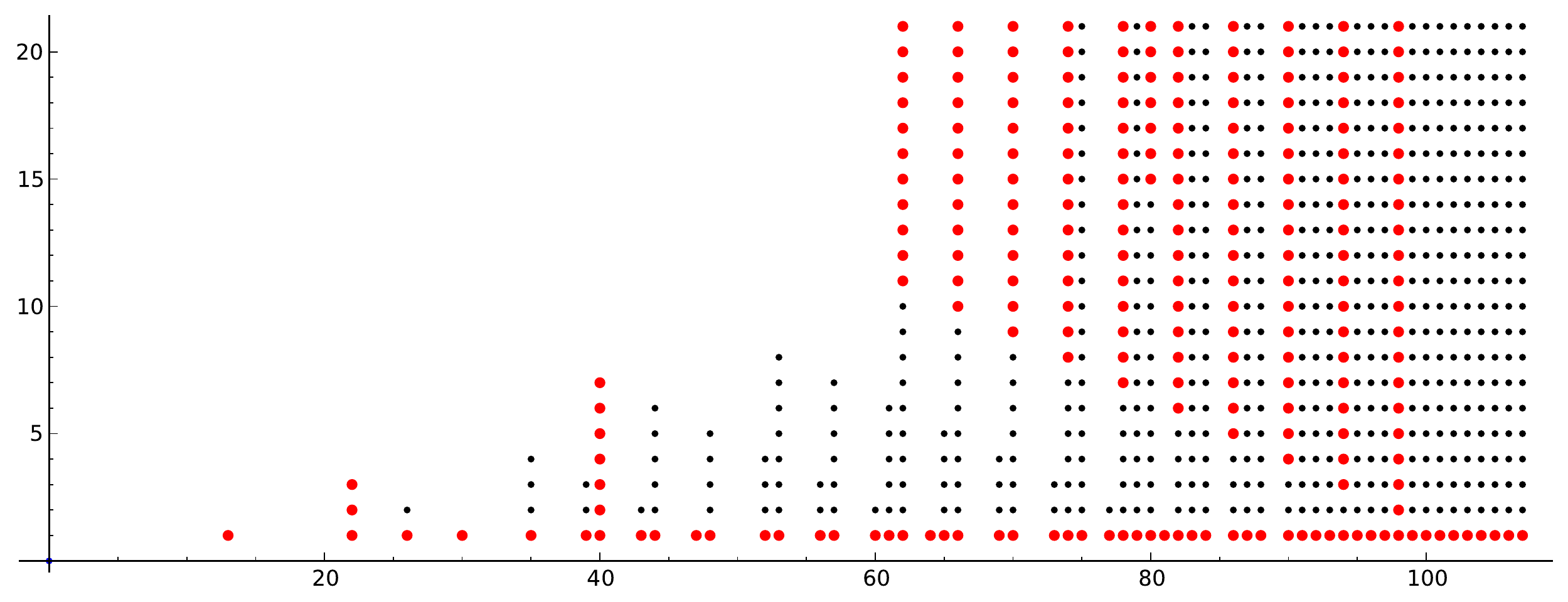}
\caption{The Leamer monoid $S_\Gamma^4$ where $\Gamma = \<13,17,22,40\>$}
\label{f:leamer}
\end{center}
\end{figure}

While numerous factorization invariants, such as Delta sets, elasticity, and catenary degrees, have been studied in Leamer monoids~\cite{teambob}, the $\omega$-value has yet to be analyzed.

\begin{prob}
Study $\omega$-primality on Leamer monoids.  
\end{prob}

\section{Acknowledgements}\label{s:ack}

Much of this work and the open questions included within were generated during the Pacific Undergraduate Research Experience (PURE Math), 
which is funded by National Science Foundation (NSF) grants DMS-1045147 and DMS-1045082, and a supplementary grant from the National Security Agency. The first author was also funded by Dr. Ezra Miller's NSF Grant DMS-1001437.  The authors would like to thank Scott Chapman, Pedro Garc\'ia-S\'anchez, Brian Wissman, and Thomas Barron for helpful comments and discussions.


\noindent {\sc Roberto Pelayo} received his B.A. in Mathematics and Psychology in 2002 from Occidental College and his Ph.D. in Mathematics in 2007 from the California Institute of Technology.  He is currently an Assistant Professor at the University of Hawai`i at Hilo, and his current research interests are low-dimensional topology and the factorization theory of commutative monoids.  


\begin{thebibliography}{HHHKR10}
\raggedbottom


\bibitem{prime}
D. F. Anderson and S. T. Chapman, 
\emph{How far is an element from being prime?}, 
J. Algebra Appl. 9 (2010), 1--11.

\bibitem{bounding} 
D. F. Anderson and S. T. Chapman, 
\emph{On bounding measures of primeness in integral domains}, 
Internat. J. Algebra Comput. 22(2012), 15 pp.

\bibitem{andalg}
D. F. Anderson, S. T. Chapman, N. Kaplan, and D. Torkornoo, 
\emph{An algorithm to compute $\omega$-primality in a numerical monoid}, 
Semigroup Forum 82 (2011), no. 1, 96--108.

\bibitem{block}
P. Baginski and S. T. Chapman, 
\emph{Factorizations of algebraic integers, block monoids, and additive number Theory}, 
Mathematical Association of America, Wash., DC, 2011.

\bibitem{delta_acm}
P. Baginski, S.T. Chapman, and Georege J. Schaeffer, 
\emph{On the delta set of a singular arithmetical congruence monoid}, 
J. Th\'eor. Normbres Bordeaux 20 (2008), 45--59.

\bibitem{elas_local}
M. Banister, J, Chaika, S.T. Chapman, and W. Meyerson, 
\emph{A theorem of accepted elasticity in certain local arithmetical congruence monoids}, 
Abh. Math. Semin. Univ. Hambg.  79(1), 79--86 (2009). 

\bibitem{acm_arithmetic}
M. Banister, S. T. Chapman, J. Chaika, and W. Meyerson, 
\emph{On the arithmetic of arithmetical congruence monoids}, 
Colloq. Math. 108(2007), 105--118.

\bibitem{semitheor}
V. Blanco, P.A. Garc\'ia-S\'anchez, and A. Geroldinger, 
\emph{Semigroup-theoretical characterizations of arithmetical invariants with applications to numerical monoids and Krull monoids}, 
Illinois J. Math.  Volume 55, Number 4, Winter 2011, Pages 1385-1414.

\bibitem{delta}
C. Bowles, S. T. Chapman, N. Kaplan, and D. Reiser, 
\emph{On delta sets of numerical monoids}, 
J. Algebra Appl.  5(2006), 1--24.

\bibitem{block_delta}
S. Chang, S. T. Chapman, and W. W. Smith, 
\emph{On minimum delta set values in block monoids over cyclic groups}, 
Ramanujan J. 14(2007), 155--17.

\bibitem{catenarytame}
S. T. Chapman, P. A. Garc\'ia-S\'anchez, and D. Llena, 
\emph{The catenary and tame degree of numerical semigroups}, 
Forum Math 21(2009), 117--129.

\bibitem{elasticity} 
S. T. Chapman, M. T. Holden, and T. A. Moore, 
\emph{Full elasticity in atomic monoids and integral domains}, 
Rocky Mountain J. Math. 36 (2006), 1437--1455.

\bibitem{interval}
S. T. Chapman, W. Puckett, and K. Shour, 
\emph{On the omega values of generators of embedding dimension three numerical monoids generated by an interval}, 
Involve, in press.

\bibitem{acm_elas}
S.T. Chapman and D. Steinberg, 
\emph{On the elasticity of generalized arithmetical congruence monoids}, 
Results Math. 58(3--4), 221--231 (2010)

\bibitem{numericalsgps}
M. Delgado, P.A. Garc\'ia-S\'anchez, and J. J. Morais, 
GAP Numerical Semigroups Package,
\url{http://www.gap-system.org/Manuals/pkg/numericalsgps/doc/manual.pdf}.

\bibitem{compasymp}
J.I. Garc\'ia-Garc\'ia, M.A. Moreno-Fr\'ias, and A. Vigneron-Tenorio, \emph{Computation of the $\omega$-primality and asymptotic $\omega$-primality with applications to numerical semigroups}, Israel J. Math, to appear, available via arXiv:1307.5807.


\bibitem{HWintersection}
P.A. Garc\'ia-S\'anchez and M.J. Leamer, 
\emph{Huneke-Wiegand Conjecture for complete intersection numerical semigroup}, 
J. Algebra, vol 391, 114--124.

\bibitem{numerical} 
P.A. Garc\'ia-S\'anchez and J.C. Rosales, 
\emph{Numerical semigroups}, 
vol. 20, Developments in Mathematics, Springer-Verlag, 2009.

\bibitem{num_interval}
P.A. Garc\'ia-S\'anchez and J.C. Rosales, 
\emph{Numerical semigroups generated by intervals}, 
Pacific J. of Math., vol 191(1), 75--83.

\bibitem{origin}
A. Geroldinger, 
\emph{Chains of factorizations in weakly Krull domains}, 
Colloq. Math.  \textbf{72} (1997), 53 -- 81.

\bibitem{nonuniq}
A. Geroldinger and F. Halter-Koch, 
\emph{Nonunique factorization}, 
Algebraic, Combinatorial and Analytic Theory, Pure and Applied Mathematics, vol. 278, Chapman \& Hall/CRC, 2006.

\bibitem{tame}
A. Geroldinger and W. Hassler, 
\emph{Local tameness of $v$-{N}oetherian monoids}, 
J. Pure Applied Algebra.  212(2008), 1509--1524.

\bibitem{teambob}
J. Haarman, A. Kalauli, A Moran, C. O'Neill, and R. Pelayo, 
\emph{Factorization properties of Leamer monoids}, 
Semigroup Forum, to appear.

\bibitem{leamer}
M. Leamer, 
\emph{Torsion and Tensor Products over Domains and Specializations to Semigroup Rings},
arXiv:1211.2896v1.

\bibitem{cca}
E. Miller and B. Sturmfels, 
\emph{Combinatorial commutative algebra}, 
Graduate Texts in Mathematics, vol. 227, Springer-Verlag, New York, 2005.

\bibitem{omidali}
M. Omidali, 
\emph{The catenary and tame degree of numerical monoids generated by generalized arithmetic sequences}, 
Forum Mathematicum. Vol 24 (3), 627--640.

\bibitem{omidalirahmati}
M. Omidali and F. Rahmati, 
\emph{On the type and minimal presentation of certain numerical semigroups}, 
Communications in Algebra 37(4) (2009), 1275--1283 .

\bibitem{quasi}
C. O'Neill and R. Pelayo, 
\emph{On the linearity of $\omega$-primality in numerical monoids}, 
J. Pure and Applied Algebra. 218 (2014) 1620--1627.

\bibitem{sage}
Sage: Open Source Mathematics Software.  
\url{www.sagemath.org}.

\bibitem{ec}
R. Stanley, 
\emph{Enumerative combinatorics. volume 1, second edition}, 
Cambridge Studies in Advanced Mathematics, 49. Cambridge University Press, Cambridge, 2012. xiv+626 pp. ISBN: 978-1-107-60262-5.



\end{thebibliography}
\end{document}